\newtheorem{theorem}{Theorem}
\newtheorem{remark}{Remark}
\newtheorem{lemma}{Lemma}
\newtheorem{proposition}{Proposition}
\title{Generalized multiscale finite element method for the steady state linear Boltzmann equation}
\author{Eric Chung, \thanks{Department of Mathematics, The Chinese University of Hong Kong, Hong Kong SAR. Email: tschung@math.cuhk.edu.hk}
Yalchin Efendiev, \thanks{Department of Mathematics \& Institute for Scientific Computation (ISC), Texas A\&M University,
		College Station, Texas, USA. Email: efendiev@math.tamu.edu}
Yanbo Li \thanks{Department of Mathematics, Texas A\&M University,
		College Station, Texas, USA. Email: lyb@tamu.edu}
and Qin Li \thanks{Department of Mathematics, University of Wisconsin, Madison, USA. Email: qinli@math.wisc.edu}
}
\begin{document}
	\maketitle{}
	\begin{abstract}
The Boltzmann equation, as a model equation in statistical mechanics, is used to describe the statistical behavior of a large number of particles driven by the same physics laws. Depending on the media and the particles to be modeled, the equation has slightly different forms. In this article, we investigate a model Boltzmann equation with highly oscillatory media in the small Knudsen number regime, and study the numerical behavior of the Generalized Multi-scale Finite Element Method (GMsFEM) in the fluid regime when high oscillation in the media presents. The Generalized Multi-scale Finite Element Method (GMsFEM) is a general approach~\cite{chung2016adaptive} to numerically treat equations with multi-scale structures. The method is divided into the offline and online steps. In the offline step, basis functions are prepared from a snapshot space via a well-designed generalized eigenvalue problem (GEP), and these basis functions are then utilized to patch up for a solution through DG formulation in the online step to incorporate specific boundary and source information. We prove the wellposedness of the method on the Boltzmann equation, and show that the GEP formulation provides a set of optimal basis functions that achieve spectral convergence. Such convergence is independent of the oscillation in the media, or the smallness of the Knudsen number, making it one of the few methods that simultaneously achieve numerical homogenization and asymptotic preserving properties across all scales of oscillations and the Knudsen number.

		
	\end{abstract}
	\section{Introduction}
The Boltzmann equation is a fundamental model in statistical mechanics. It traces the evolution of the distribution function on the phase space, and describes the dynamics of a large number of particles that follow the same physics rules via a statistical manner. The equation encodes the particles' free transport and their interactions with the media and each other. Depending on the physics the particles follow, the interaction term may differ, but to a large extent, many particles, including neutrons, photons and phonons interact mainly with the media, making the collision term linear. The dynamics then can be described by the linear Boltzmann equation:
	
\begin{equation}
\partial_tu + v\cdot\nabla u = \sigma\mathcal{S}u(x,v) - \eta u\,,\quad (x,v)\in\Omega\times V\,.
\end{equation} 
	
In the equation, $u$ is a function on the phase space $(x,v)\in\Omega\times V$. The evolution is governed by $v\cdot\nabla u$, a free transport term, and the terms on the right side of the equation that represent the ``collision" and quantify the particles' interactions. These interactions include a pure absorption term $\eta u$ where $\eta$ is the absorption coefficient, and a scattering term $\sigma\mathcal{S}u$. The specific form of the operator $\mathcal{S}$ varies from particle to particle, and it is typically a functional independent of $x$. The strength of the interactions are governed by the size of $\sigma$ and $\eta$. For photons specifically, the radiative transfer equation is used, these coefficients are termed the optical thickness. In this paper, for simplicity, we take $\mathcal{R}$ to be:
\begin{equation}\label{eqn:op_L}
\mathcal{R}u(x,v) = \int_Vu(x,v')dv' - u(x,v)\,,
\end{equation}
where $dv$ is a normalized measure: $\int dv = 1$, and we set $\eta=1$.

The equation demonstrates different behavior in different regimes. One particularly interesting regime is called the diffusion regime, in which the scattering coefficient is extremely strong and the pure absorption term is weak. Mathematically, consider the steady state case, we model the equation to:
\begin{equation}\label{eqn:Boltzmann_kn}
v\cdot\nabla u + \epsilon \, u= \frac{1}{\epsilon}\sigma\mathcal{R}u(x,v)\,.
\end{equation}

In this equation, $\epsilon$ is termed the Knudsen number, and it characterizes the ratio of the mean free path and the typical domain length. Physically it reflects the number of collision a normal particle experiences inside $\Omega$ before emitting. When $\epsilon$ is small, the number of collision per particle is large, meaning the particle gets scattered many times before emitting, and thus some kind of averaging effects take place, and the local equilibrium is achieved. In the case of~\eqref{eqn:op_L}, the equilibrium reads:
\begin{equation}\label{eqn:equilibrium}
u(x,v)\sim \rho(x)\,,
\end{equation}
and through asymptotic analysis, one could mathematically derive that $\rho$ satisfies the diffusion equation:
\begin{equation}\label{eqn:diff_limit}
C\nabla\cdot(\frac{1}{\sigma}\nabla\rho) = \rho\,,
\end{equation}
where $C$ depends on the dimension.
	
The convergence from~\eqref{eqn:Boltzmann_kn} to the asymptotic limit~\eqref{eqn:diff_limit} was conjectured in~\cite{Bossoussan-Lions-Papanicolou} and was made rigorous in~\cite{Bardos-Sentos-Sentis} for periodic boundary condition. In~\cite{Wu-Guo} the authors studied the boundary layer effect with geometric corrections, and the asymptotic convergence rate was shown to degrade~\cite{Li-Lu-Sun, JSP}.

However, all the rigorous proofs are done assuming certain smoothness of $\sigma$. In particular, it is assumed that $\sigma$ is sufficiently smooth. At the current stage, very limited works have been done when oscillations present in the media. Denote $\delta$ the small scale in the media, we rewrite our equation as:
\begin{equation}\label{eqn:Boltzmann_kn_rewrite}
v\cdot\nabla u + \epsilon \, u= \frac{1}{\epsilon}\sigma^\delta\mathcal{R}u(x,v)\,,
\end{equation}
where $\sigma^\delta(x) = \sigma(x,\frac{x}{\sigma})$ to explicit reflects the fast variable $\frac{x}{\sigma}$ dependence. On the theoretical level, to our best knowledge, except a few cases~\cite{Goudon-Mellet}, the theory is largely in lack, except a few cases~\cite{Goudon-Mellet}, and to a large extend, we do not yet know the resonance of the two parameters, and how they contribute in the asymptotic limits of the equation. And on the computation level, the only numerical study aware to the authors is presented in~\cite{ll16} where the limits are taken in order: $\delta \ll \epsilon\ll 1$.

The problem is very challenging on the numerical level. The small $\epsilon$ makes the collision term extremely stiff, bringing ill-conditioning to the associated discrete system, and thus severe stability issue; and the small $\delta$ brings wild oscillations to the media and the solution, and for accuracy of the numerical solution, high resolution is needed and small discretization is necessary, driving up the numerical cost.

This is certainly unaffordable, especially in the zero limit of $\epsilon$ and $\delta$. The main goal of this paper is to develop a general numerical treatment that could deal with the equation with a wide range of $\epsilon$ and $\delta$, and perform uniformly well, with the error term independent on the small parameters.

The approach we take is in line of Generalized Multi-scale Finite Element Method (GMsFEM). This is an offline-online framework that builds a good set of local basis functions during the offline step and patches local solutions up in the online step, similar to the original Multi-scale Finite Element Method (MsFEM). One main feature of GMsFEM is its basis selection procedure in the offline step where a special generalized eigenvalue problem is designed. This special generalized eigenvalue problem encodes the oscillations and the ill-conditioning of the problem.

More explicitly, like many other multi-scale methods, we build nested grids with coarse grid $H$ and fine grid $h$ satisfying $H\gg\epsilon\gg h$. In the offline step, local basis functions are constructed within coarse mesh $H$ on fine mesh $h$ that capture fine scale structure and preserve the heterogeneities in the media; and in the online step, the basis functions are patched up through Galerkin framework~\cite{arbogast2004analysis, chu2010new, engquist2013heterogeneous, efendiev2011multiscale, efendiev2009multiscale, efendiev2004multiscale, ghommem2013mode, chung2014generalized, chung2013sub,GMsFEM-elastic,elastic-jcp,aarnes04,jennylt03}. Online step is rather standard and different methods give various algorithms in the offline step. What makes GMsFEM favorable is indeed its offline step, in which the full list of $a$-harmonic functions are collected, and then the most ``representative" modes are selected through a specially designed generalized eigenvalue problem (GEP). The definition of the matrices in the GEP is associated with the final error term, which permits certain spectral error decay. We should mention GMsFEM was initially used for elliptic equations containing strong heterogeneous media, a topic about which the literature is extremely rich. For this particular problem, there is another category of method: upscaling-type methods. In upscaling methods, either locally or globally an effective media is numerically prepared so that equations could be computed on coarser grids with the effective media replacing the heterogeneous one~\cite{durlofsky1991numerical, wu2002analysis,numerical-homo,2d-waves}. But this approach is not going to be pursued in this paper.

As a framework, the GMsFEM approach is rather easy to use, and the main mathematical challenge, when utilized for tackle different equations, is to develop the right GEP. For the linear Boltzmann equation with heterogeneous media, we frame the problem in the discontinuous Galerkin setting, and are able to find two matrices that resemble the mass and stiffness matrices in the GEP of the elliptic equations, which allows us to show the optimality of the basis functions with physically meaning definition of the norm for the error. As the standard approach, these basis functions  are then used in the online computation.

The paper will be organized in the following way. In Section 2, we introduce some preliminaries. Both discrete ordinates, the standard kinetic solver, and GMsFEM for elliptic equation will be presented. Some properties will be presented along. We present the algorithm in Section 3, which is further divided into two subsection introducing offline and online procedures. Section 4 contains the analysis where we present the wellposedness, and convergence results. The small $\epsilon$ limit of the method will also be discussed. Numerical results are to be shown in Section 5.
	
To end the introduction, we comment that the scaling problem studied in this paper is not mathematically artificial. In fact, as one redefines $x\to\frac{x}{\epsilon}$, $\sigma(x)$ should have been automatically changed to highly oscillatory media $\sigma(x/\epsilon)$. Another practical example is to inject light into crystals, where the radiative transfer equation (one particular linear Boltzmann equation) is utilized. In this case, the periodic crystal structure should be encoded in the media and the period that corresponds to $\delta$ in our math formulation, is expected to be small.
	
\section{Preliminaries}\label{sec:pre}
In this section we prepare some basic important concepts. In particular, we will first present the discrete ordinate method for the linear Boltzmann equation, and then give a brief account of the Generalized Multiscale Finite Element Method (GMsFEM). They are the building blocks for the algorithm designed in this paper.	
	
\subsection{$S_N$ Boltzmann Equation}\label{sec:S_N}
The Boltzmann equation gives a statistical description of particle dynamics. Its extensive use in all kinds of engineering problems brought its great popularity, and literature on both theory and numerics has been very rich. Among all numerical methods developed for the Boltzmann equation, the discrete ordinate method stands out for its simplicity and intuitiveness, and is the method we will use in our GMsFEM. Essentially it discretizes the velocity domain, and the semi-discrete system is a coupled PDE in the physical space.
	
We start the discussion with the following model equation:
\begin{equation}\label{Boltzmann_original}
\begin{split}
\mathbf{v}\cdotp \nabla u(x,\mathbf{v})+\epsilon u(x,\mathbf{v}) &\;=\;\frac{1}{\epsilon a^\delta}\mathcal{R}u(x,\mathbf{v}) \quad  \text{ in } \Omega\times S^1\\
u(x,\mathbf{v}) &\;=\;  g(x,\mathbf{v})  \quad\quad\quad\; \text{ on }\Gamma^-
\end{split}\,,
\end{equation}
where $x\in\Omega\subset\mathbb{R}^2$ is a bounded domain with a Lipschitz boundary $\partial\Omega$. The velocity is $v\in S^1$, the unit circle. The media $a^\delta$ presents fine scale structure at $\delta$ order, and the stiffness of the collision operator $\mathcal{R}$ is determined by $\frac{1}{\epsilon}\gg 1$. We have the inflow boundary condition, with the inflow data $g(x,\mathbf{v})$ defined on $\Gamma^-$, a collection of coordinates on the boundary with velocity pointing into the domain:
	\[
	\Gamma^-=\left\lbrace (x,v)\in \partial\Omega\times S^1\,|\, \mathbf{v}\cdot \mathbf{n}_x<0 \right\rbrace \,.
	\]
	Here $\mathbf{n}_x$ is the unit outer normal direction at $x\in\partial\Omega$. For simplicity, we use the model collision operator with homogeneous scattering coefficient:
	$$
	\mathcal{R}u(x,\mathbf{v})=\overline{u}(x)-u(x,\mathbf{v})=\frac{1}{2\pi}\int_{S^1}u(x,\mathbf{v})d\mathbf{v}-u(x,\mathbf{v})\,.
	$$
	
	The discrete ordinate method, denoted by $S_N$, is a standard method to discretize the velocity domain. One first sample $m$ quadrature points on $S^1$ and each sample point is associated with a weight, denoted by: $\{(\mathbf{v}_i,\alpha_i), i = 1,. . . ,m\}$, where $\mathbf{v}_i$ are the quadrature points and $\alpha_i$ are the corresponding positive weights. These quadrature points and weights are chosen so that:
\begin{equation}\label{eqn:weight_cond}
\sum_{i=1}^{m}\alpha_i=1,\quad \text{ and }\quad\frac{1}{2\pi}\int_{S^1}u(x,\mathbf{v})d\mathbf{v}\approx \sum_{i=1}^{m}\alpha_i u(x,\mathbf{v}_i)\,.
\end{equation}
The equation then will be discretized into semi-discrete system. Let $u_i(x)=u(x,\mathbf{v}_i)$, the integro-differential~\eqref{Boltzmann_original} is then transformed into a system of $m$ coupled partial differential equations:
\begin{equation}\label{main}
\begin{split}
\mathbf{v}_i\cdot \nabla u_i+\epsilon u_i+ \frac{1}{\epsilon a^\delta}\left( u_i-\sum_{j=1}^m \alpha_j u_j \right) & \;=\;  0\quad \quad\quad \text{in}\;\Omega\,,\\
u_i & \;=\;  g_i \quad \quad\;\;\;\text{on}\;\Gamma^-\,,
\end{split}
\end{equation}
where $g_i=g(x,\mathbf{v}_i)$ is the inflow boundary data. Denote
\begin{equation}\label{a_ij}
a_{ij}=\left\{\begin{matrix}
\alpha_i-\alpha_i^2, & i=j,\\
-\alpha_i\alpha_j, & i\neq j.
\end{matrix}\right.\,.
\end{equation}	
Then~\eqref{main} is further simplified to:
\begin{equation}\label{main_a}
\mathbf{v}_i\cdot \nabla u_i+\epsilon u_i+\frac{1}{\epsilon a^\delta\alpha_i}\sum_{j}a_{ij} u_j  \;=\;  0\,.
\end{equation}

Since $\{a_{ij}\}$ is basically the discrete version of the collision operator $-\mathcal{R}$, it resembles the properties of $\mathcal{R}$. In particular, the matrix is positive semi-definite with a known kernel.
\begin{proposition}\label{prop:semi-definite}
Define a matrix $\mathsf{A}$ so that $\mathsf{A}_{ij} = a_{ij}$, we claim:
\begin{itemize}
\item $\mathsf{A}$ is positive semi-definite.
\item $\mathsf{u}^\top\mathsf{A}\mathsf{u}=0$ if and only if $\mathsf{u}=(u_1,u_2,...,u_m)$ is isotropic, i.e., $u_1=u_2=\cdots=u_m$.
\item $\mathsf{v}^\top\mathsf{A}\mathsf{u}=0$ if either $\mathsf{u}$ or $\mathsf{v}$ is isotropic.
\end{itemize}
\end{proposition}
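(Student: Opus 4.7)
The plan is to observe that the matrix $\mathsf{A}$ admits the clean decomposition
$$\mathsf{A} \;=\; D - \boldsymbol{\alpha}\boldsymbol{\alpha}^{\top}, \qquad D = \mathrm{diag}(\alpha_1,\ldots,\alpha_m),\quad \boldsymbol{\alpha}=(\alpha_1,\ldots,\alpha_m)^\top,$$
which follows directly from the definition \eqref{a_ij}. In particular, $\mathsf{A}$ is symmetric, so the third bullet will follow from the second once we show that isotropic vectors lie in $\ker \mathsf{A}$.

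First, I would compute the quadratic form. For any $\mathsf{u}\in\mathbb{R}^m$, writing $\bar u = \sum_i \alpha_i u_i$ and using $\sum_i \alpha_i = 1$, one obtains
$$\mathsf{u}^\top \mathsf{A}\, \mathsf{u} \;=\; \sum_{i} \alpha_i u_i^2 \;-\; \Bigl(\sum_i \alpha_i u_i\Bigr)^{\!2} \;=\; \sum_i \alpha_i (u_i - \bar u)^2,$$
that is, the weighted variance of $\mathsf{u}$ against the probability weights $\alpha_i$. Since every $\alpha_i>0$, this sum is nonnegative, proving the first bullet; it vanishes if and only if $u_i = \bar u$ for all $i$, which is precisely the isotropy condition, giving the second bullet.

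For the third bullet, I would show directly that isotropy implies $\mathsf{A}\mathsf{u} = 0$: if $u_i = c$ for every $i$, then
$$\mathsf{A}\mathsf{u} \;=\; D\mathsf{u} - \boldsymbol{\alpha}(\boldsymbol{\alpha}^\top \mathsf{u}) \;=\; c\,\boldsymbol{\alpha} - c\Bigl(\sum_i \alpha_i\Bigr)\boldsymbol{\alpha} \;=\; 0,$$
so $\mathsf{v}^\top \mathsf{A}\mathsf{u} = 0$ for any $\mathsf{v}$. If instead $\mathsf{v}$ is isotropic, the same computation applied to $\mathsf{A}^\top \mathsf{v} = \mathsf{A}\mathsf{v} = 0$, together with $\mathsf{v}^\top \mathsf{A}\mathsf{u} = (\mathsf{A}\mathsf{v})^\top \mathsf{u}$, yields the conclusion.

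There is no real obstacle here; the entire proof reduces to the algebraic identity $\mathsf{A} = D - \boldsymbol{\alpha}\boldsymbol{\alpha}^\top$ and the recognition that $\mathsf{u}^\top \mathsf{A}\mathsf{u}$ is a weighted variance. The only step that requires slight care is the use of $\sum_i \alpha_i = 1$ from \eqref{eqn:weight_cond} in collapsing the cross terms, but this is exactly what makes the quadratic form a variance rather than a general nonnegative combination.
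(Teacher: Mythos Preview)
Your proof is correct and takes a slightly different route from the paper. The paper expands the quadratic form directly into the pairwise form
\[
\mathsf{u}^\top\mathsf{A}\mathsf{u}=\sum_{i<j}\alpha_i\alpha_j(u_i-u_j)^2,
\]
and for the third bullet computes $\sum_{i,j}a_{ij}u_j v_i$ entrywise using $\sum_i\alpha_i=1$. You instead work from the structural decomposition $\mathsf{A}=D-\boldsymbol{\alpha}\boldsymbol{\alpha}^\top$, which yields the quadratic form as a weighted variance $\sum_i\alpha_i(u_i-\bar u)^2$ and gives $\mathsf{A}\mathsf{u}=0$ for isotropic $\mathsf{u}$ in one line. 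The two representations of the quadratic form are of course equal (this is the classical variance/pairwise-difference identity for probability weights), so neither argument is deeper than the other. Your version has the mild advantage of showing that isotropic vectors lie in $\ker\mathsf{A}$ itself rather than only in the zero set of the quadratic form, which makes the third bullet immediate by symmetry; the paper's pairwise form, on the other hand, makes the ``only if'' direction of the second bullet visually obvious without needing $\alpha_i>0$ to be invoked separately.
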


\begin{proof}
The computation is straightforward:
\begin{align*}
\mathsf{u}^\top\mathsf{A}\mathsf{u}=\sum_{i,j=1}^m a_{ij}u_ju_i=&\sum_{i=1}^m(\alpha_i-\alpha_i^2) u_i^2-2\sum_{i< j}\alpha_i\alpha_ju_i u_j\\
		=&\sum_{i=1}^m\alpha_i\sum_{j\not= i}\alpha_j u_i^2-2\sum_{i< j}\alpha_i\alpha_ju_i u_j\\
		=&\sum_{i< j}\alpha_i\alpha_j\left(u_i-u_j \right)^2\geq 0\,. 
		\end{align*}
The equal sign is achieved only when $u_i = u_j$ for all $i\neq j$.

To show the third bullet point, we note that the matrix $\mathsf{A}$ is symmetric, it suffices to assume that $\mathsf{u}$ is isotropic: $u_1=u_2=\cdots=u_m=\overline{u}$. Then:
\begin{align*}
\mathsf{v}^\top\mathsf{A}\mathsf{u}=\sum_{i,j=1}^m a_{ij}u_jv_i=&\sum_{i,j=1}^m a_{ij}\overline{u}v_i\sum_{i=1}^m\overline{u}v_i\sum_{j=1}^ma_{ij}\\
=&\sum_{i=1}^m\overline{u}v_i(\alpha_i-\alpha_i^2+\sum_{j\neq i}-\alpha_i\alpha_j)\\
=&\sum_{i=1}^m\overline{u}v_i\alpha_i(1-\alpha_i-\sum_{j\neq i}\alpha_j)=0\,.
\end{align*}
where we have used the weight condition~\eqref{eqn:weight_cond}: $\sum_i\alpha_i=1$.
\end{proof}

\subsection{Generalized Multi-scale Finite Element Method}\label{sec:GMsFEM}
The discrete ordinate method is used to discretize the velocity domain, and for the spatial domain, we follow the GMsFEM approach, which, by choosing ``optimal basis functions" via a special design of a GEP, we can obtain a reduced model that is robust for all values of $\epsilon$ and $\delta$. For the completeness of the paper, we now present a general idea of GMsFEM, and its application to the heterogeneous Boltzmann equation will be discussed in details in Section 3.

The GMsFEM uses two stages: offline and online. In the offline stage, a small dimensional approximation space is constructed to solve the global problem for any external source on a coarse grid, which does not need to resolve any scales of the media and solution. The offline stage consists of two main concepts. The snapshot space,  $V_{\text{snap}}^{i}$, is constructed for a generic coarse element $K_i$. The snapshot solutions are used to compute local multiscale basis functions. An appropriate snapshot space can
\begin{itemize}
\item provide a faster convergence,
\item provide problem relevant restrictions on the coarse spaces (e.g., divergence free solutions),
\item reduce the cost associated with constructing the offline spaces.
\end{itemize}
Standard choices of snapshot spaces (see \cite{chung2016adaptive}) are (1) all fine-grid functions; (2) snapshots of local solutions; (3) oversampling snapshots of local solutions; and (4) force-based snapshots. In this paper, we will use snapshots of local solutions. 

More specifically, these are functions $\eta_l^{(i)}$ that satisfy
\[
\mathcal{L}( \eta_{l}^{(i)})=0\ \ \text{in} \ K_i
\]
subject to some boundary conditions, where $\mathcal{L}$ is the differential operator under consideration, and $l$ is the index for the boundary condition. One can use all fine grid delta functions as boundary conditions or randomized boundary conditions~\cite{chung2016adaptive, randomized2014}.

	The offline space, $V_H$, is computed for each
	$K_i$
	(with elements of the space denoted $\psi_l^{(i)}$).
	We perform a spectral decomposition in the snapshot space and select
	the dominant eigenfunctions
	(corresponding to the smallest eigenvalues)
	to construct the offline (multiscale) space.
	The convergence rate of the resulting method is proportional
	to $1/\Lambda_*$, where
	$\Lambda_*$ is the smallest eigenvalue that the corresponding eigenvector
	is not included in the multiscale space.
	We would like to select local spectral problem such that
	we can remove many small eigenvalues with fewer multiscale basis
	functions. 
	The choice of spectral problems is usually problem dependent and is based on convergence analysis. 
	In general, 
	the error is decomposed into coarse subdomains.
	The energy functional corresponding to
	the domain $\Omega$ is denoted by $a_\Omega(u,u)$, e.g.,
	$a_\Omega(u,u) = \int_\Omega \kappa \nabla u\cdot \nabla u$.
	Then,
	\begin{equation}
	\begin{split}
	a_\Omega(u-u_H,u-u_H)\preceq
	\sum_K a_K(u^K-u_H^K,u^K-u_H^K),
	\end{split}
	\end{equation}
	where $K$ are coarse regions ($K_i$),
	$u^K$ is the localization of the solution.
	The local spectral problem is chosen to bound
	$ a_K(u^K-u_H^K,u^K-u_H^K)$.
	We seek
	the subspace $V_{H}^{i}$
	such that for any
	$\eta\in V_{\text{snap}}^{i}$,
	there exists
	$\eta_0\in V_{H}^{i}$ with,
	\begin{equation}
	\label{eq:off1}
	a_{K_i}(\eta-\eta_0,\eta-\eta_0)\preceq {\beta}
	s_{K_i}(\eta-\eta_0,\eta-\eta_0),
	\end{equation}
	where
	$s_{K_i}(\cdot,\cdot)$ is an auxiliary bilinear form, and $\beta$ is an accuracy parameter.
	The auxiliary bilinear form needs to be chosen such that the solution
	is bounded in the corresponding norm.
	
	Finally, in the online stage, the space $V_H$ is used together with a suitable coarse grid discretization
	to solve the problem. The same space $V_H$ is used for all input sources. 

\section{GMsFEM for heterogeneous Boltzmann equation}
We now apply the GMsFEM approach to numerically study the heterogeneous Boltzmann equation, expressed in the discrete ordinate system~\eqref{main}.

The numerical difficulties in solving this equation are summarized as follows. First, the media $a^\delta$ is highly oscillatory. The fine structure oscillates at the scale of $\delta$ injects high heterogeneities to $u_i$. In order to capture these details, the mesh size $h$ has to be smaller than $\delta$, which in turn brings prohibitive numerical cost. Secondly, the operator $\mathcal{L}$ is scaled by $\frac{1}{\epsilon}$, and in the zero limit of $\epsilon$, the term is extremely stiff, and this brings concern in stability. It is our aim in this paper to develop a multiscale method that can address these issues. In particular, inspired by GMsFEM, we will design a numerical method that relies on offline basis construction and online basis patching procedure, and its numerical error has limited dependence on the two small parameters.

We will construct nested grids and call $\mathcal{T}^h$ the partition of $\Omega$ into fine finite elements, and $\mathcal{T}^H$ the partition into coarse elements, where $h$ and $H$ are the fine and coarse mesh sizes respectively. For simpler notation, we consider rectangular coarse elements as shown in Figure \ref{partition}. The basis functions and discretization are based on the coarse grid, and the fine grid is used to numerically compute the basis functions. We also denote the collection of coarse edges $\mathcal{E}_H$, and $\mathcal{E}_H^0=\mathcal{E}_H\backslash \partial \Omega$ the collection of coarse edges in the interior of the domain.

\begin{figure}[ht]
\begin{minipage}[t]{0.5\textwidth}
\centering
\includegraphics[width=2.6in]{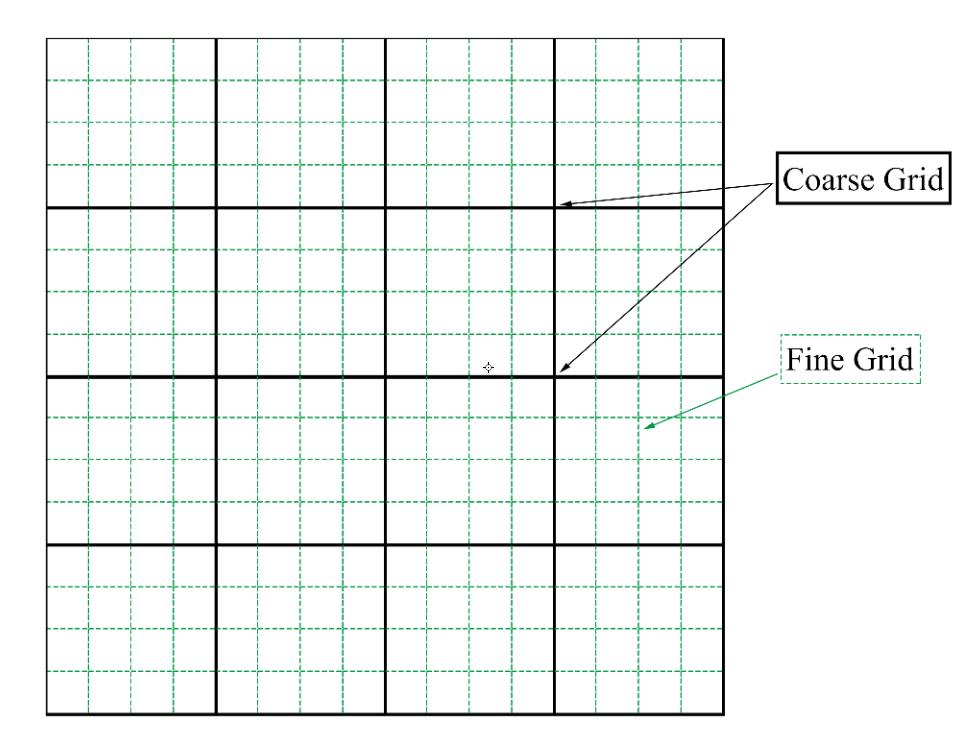}
\end{minipage}%
\begin{minipage}[t]{0.5\textwidth}
\centering
\includegraphics[width=2.95in]{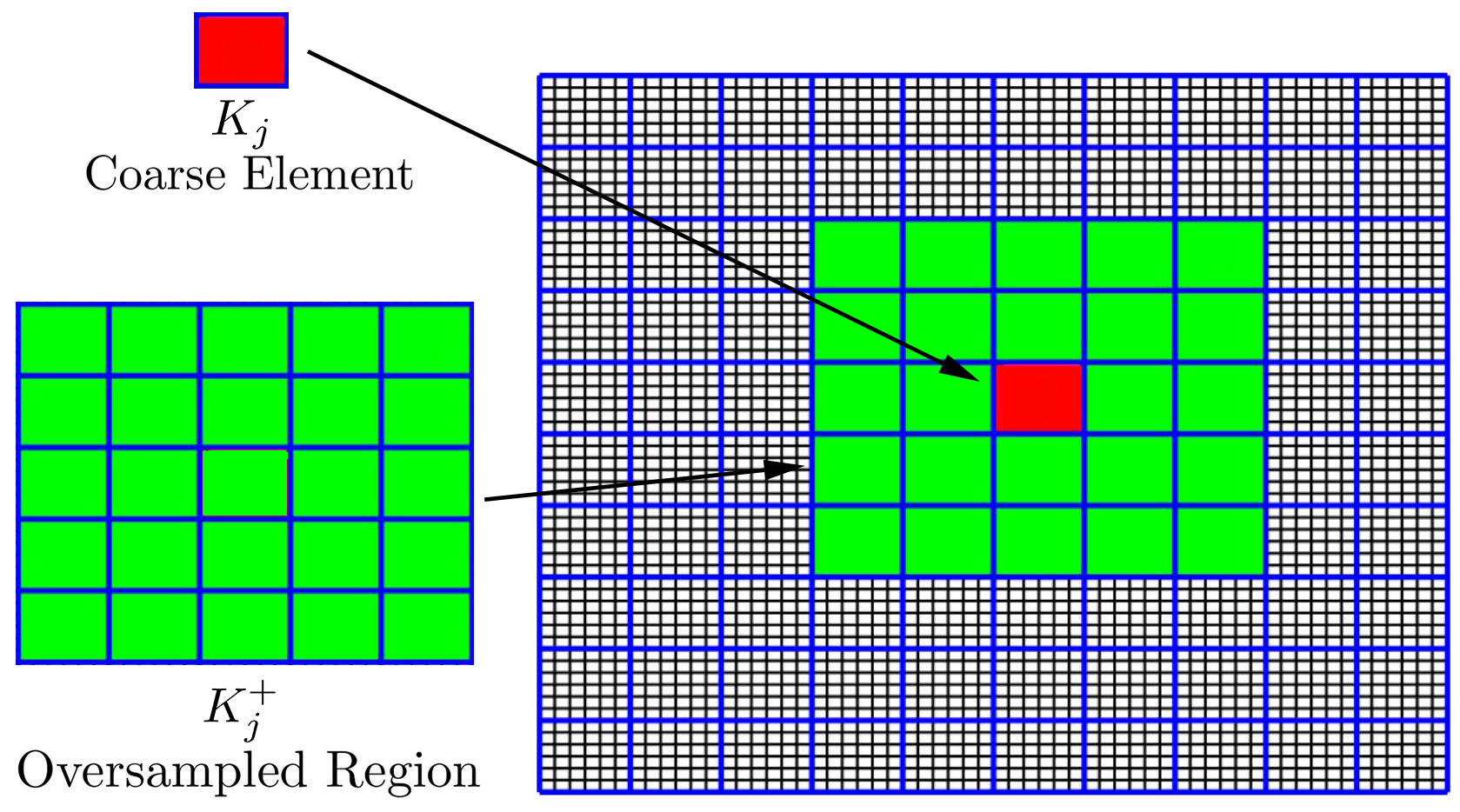}
\end{minipage}
\caption{Left: an illustration of fine and coarse grids. Right: an illustration of a coarse neighborhood and a coarse element.}
\label{partition}
\end{figure}

The discontinuous Galerkin method allows one to pick different values of the solution on different sides of the edges. Suppose two adjacent coarse blocks $\tau_i$ and $\tau_j$ share an edge, and that $\tau_i$ is the upwind block, then we denote $w^+=w|_{\tau_i}$ and $w^-=w|_{\tau_j}$. Notice that depending on the direction of a specific $\mathbf{v}_i$, different block could be picked as the upwind block, as shown in Figure \ref{upwind}. 
\begin{figure}[ht]
\centering
\begin{minipage}[t]{1\textwidth}
\centering
\includegraphics[width=2.5in]{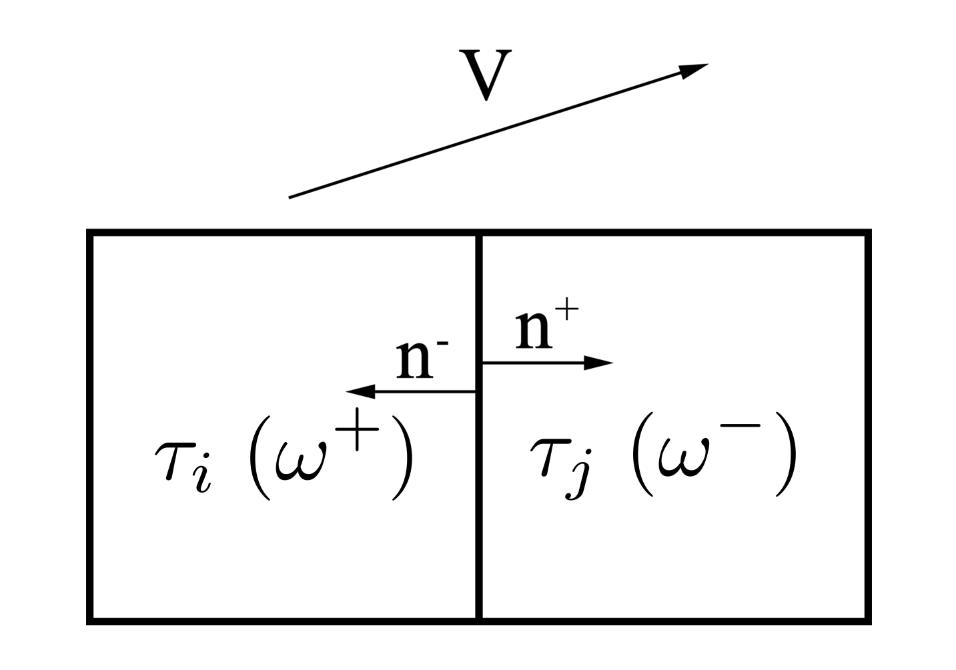}
\end{minipage}%
\caption{An illustration of upwind and downwind blocks.}
\label{upwind}
\end{figure}

For the fine scale approximation, we choose the discrete function space to be:
\begin{align*}
V_h= & \left\{ v\in L^2(\Omega)\,|\; v|_\tau\in Q_1(\tau),\;\forall \tau\in \mathcal{T}^h\text{ and } v|_K\in C^0(K),\;\forall K\in \mathcal{T}^H  \right\}\,,
\end{align*}
and we seek for numerical solution such that
\[
u_h=(u_{h,1},u_{h,2},...,u_{h,m})\in (V_h)^m\,.
\]
This means the numerical solution for each $u_{h,i}$, when confined in each fine grid, is a linear function, and is continuous function across coarse grids. In the variational formulation: for all $i=1,2,...,m$, we have:
	\begin{equation}
	\label{fine}
	\begin{split}
	& - \int_\Omega u_{h,i}\nabla w_i\cdot\mathbf{v}_i  + \sum_{e\in\mathcal{E}_H^0}\int_e  u_{h,i}^+[w_i]\cdot\mathbf{v}_i+\sum_{e\in{\Gamma^+}}\int_e  u_{h,i} w_i\mathbf{v}_i\cdot\mathbf{n}+\int_\Omega\epsilon u_{h,i}w_i\\
	+&\int_\Omega\frac{1}{\epsilon a^\delta}\left( u_{h,i}-\sum_{j=1}^m \alpha_ju_{h,j}\right) w_i=-\sum_{e\in{\Gamma^-}}\int_e  g_i w_i\mathbf{v}_i\cdot\mathbf{n}, \quad\forall w_i\in V_h\,,
	\end{split}
	\end{equation}
or with the definition of $\{a_{ij}\}$ in~\eqref{a_ij}, they could be summed up to
\begin{align}\label{eq:fine}
\begin{split}
&\sum_{i=1}^m\alpha_i\left( -\int_\Omega  u_{h,i}\nabla w_i\cdot\mathbf{v}_i + \sum_{e\in\mathcal{E}_H^0}\int_e  u_{h,i}^+[w_i]\cdot\mathbf{v}_i+\sum_{e\in{\Gamma^+}}\int_e  u_{h,i} w_i\mathbf{v}_i\cdot\mathbf{n}+\int_\Omega\epsilon u_{h,i}w_i\right) \\
&+\int_\Omega\frac{1}{\epsilon a^\delta}\sum_{i,j=1}^m a_{ij}u_{h,j}w_i=-\sum_{i=1}^m\alpha_i\sum_{e\in{\Gamma^-}}\int_e  g_i w_i\mathbf{v}_i\cdot\mathbf{n}, \quad\forall w\in (V_h)^m,
\end{split}
\end{align}
In the equation, we have used upwind approximation for $\mathbf{v}_i \cdot \nabla u_i$ and the jump operator $[\cdot]$ is defined by
\[
[w]=\left\{\begin{matrix}
w^-\mathbf{n}^-+w^+\mathbf{n}^+   & \text{on }\mathcal{E}_H^0\\ 
w^-\mathbf{n}^-   & \text{on }\Gamma^-\\ 
w^+\mathbf{n}^+   & \text{on }\Gamma^+
\end{matrix}\right.\,.
\]
	
For notational simplicity, we define two bilinear operators
	\begin{equation*}
	\begin{split}
	a(u,w)\;=\;&\sum_{i=1}^m\alpha_i a_i(u_i,w_i),
	\quad\text{with} \quad a_i(u_i,w_i)\;=\; - \int_\Omega u_i\nabla w_i\cdot\mathbf{v}_i + \sum_{e\in\mathcal{E}_H^0}\int_e u_i^+[w_i]\cdot\mathbf{v}_i+\sum_{e\in{\Gamma^+}}\int_e u_i w_i\mathbf{v}_i\cdot\mathbf{n}\,,\\
	l(u,w)  \;=\;&\sum_{i=1}^m\alpha_i\int_\Omega\frac{1}{\epsilon a^\delta}\left( u_i-\sum_{j=1}^m \alpha_j u_j \right) w_i+\sum_{i=1}^m\alpha_i\int_\Omega\epsilon u_iw_i=\int_\Omega\frac{1}{\epsilon a^\delta}\sum_{i,j=1}^m a_{ij} u_j w_i+\sum_{i=1}^m\alpha_i\int_\Omega\epsilon u_iw_i,
	\end{split}
	\end{equation*}
	and a linear operator:
	\begin{equation*}
	F(w)\;=\;\sum_{i=1}^{m}\alpha_i F_i(w_i),\quad\text{with}\quad F_i(w_i)\;=\;-\sum_{e\in{\Gamma^-}}\int_e g_iw_i\mathbf{v}_i\cdot\mathbf{n}\,.
	\end{equation*}
With these notations, equation~\eqref{eq:fine} now writes
\begin{equation}\label{eq:coupled_fine}
a(u_h,w)+l(u_h,w)=F(w),\quad \forall w\in (V_h)^m\,.
\end{equation}

With $h\ll\text{min}\{\epsilon\,,\delta\}$, it is a standard result that $u_h \approx u$, with an error term of size $\mathcal{O}(h^2/\text{min}(\epsilon,\delta))$. For significantly small $h$, the function $u_h$ is considered as a reference solution in accessing the performance of our method.

However, using small $h$ that resolves $\epsilon$ and $\delta$ leads to a very big system that is numerically very costly. We would like to develop an algorithm that seeks for solution only on the coarse grid $H$ and the corresponding solution $u_H\approx u_h\approx u$. To do that, an offline-online procedure developed in~\cite{randomized2014} for elliptic equation, termed GMsFEM (Generalized Multiscale Finite Element Method) will be pursued. In the offline step, an approximate space $V_H$ is constructed to replace $V_h$. This newly constructed space would have much less degrees of freedom but preserves $V_h$'s important factors. The final multiscale solution will be computed in the online step where the boundary condition $g(x,\mathbf{v})$ will be taken into account to determine the degrees of freedom in $V_H$.

We quickly review the online stage in Section~\ref{sec:online}, and the complicated offline step will be discussed in detail in Section~\ref{sec:basis}.

\subsection{Online computation}~\label{sec:online}
In online stage, we will use the multiscale basis functions together with a coarse grid discretization to solve the given problem. The coarse grid discretization we used is a discontinuous Galerkin method with upwind flux. Assume that a multiscale finite element space $V_H=\text{span}\{\phi_p\}$ is determined, and this space, in some sense, approximates $(V_h)^m$. Then similar to the formulation as in~\eqref{eq:fine}, the solution will be sought in
\[
u_H=(u_{H,1},u_{H,2},...,u_{H,m})\in V_H
\]
so that
\begin{align}\label{global}
	\begin{split}
	&\sum_{i=1}^m\alpha_i\left( -\int_\Omega  u_{H,i}\nabla w_i\cdot\mathbf{v}_i + \sum_{e\in\mathcal{E}_H^0}\int_e  u_{H,i}^+[w_i]\cdot\mathbf{v}_i+\sum_{e\in{\Gamma^+}}\int_e  u_{H,i} w_i\mathbf{v}_i\cdot\mathbf{n}+\int_\Omega\epsilon u_{H,i}w_i\right) \\
	&+\int_\Omega\frac{1}{\epsilon a^\delta}\sum_{i,j=1}^m a_{ij}u_{H,j}w_i=-\sum_{i=1}^m\alpha_i\sum_{e\in{\Gamma^-}}\int_e  g_i w_i\mathbf{v}_i\cdot\mathbf{n}, \quad\forall w\in V_H\,.
	\end{split}
\end{align}
Similar to~\eqref{eq:coupled_fine}, we use a compact notation:
\begin{equation}\label{eq:coupled_global}
	a(u_H,w)+l(u_H,w)=F(w),\quad \forall w\in V_H\,.
\end{equation}
	
To implement the scheme above, we define the following matrices
\begin{equation}\label{eqn:A_L_b_matrix}
A_{pq} = a(\phi_{p},\phi_{q})\,,\quad L_{pq} = l(\phi_{p},\phi_{q})\,,\quad\text{and}\quad b_{p} = F(\phi_{p})\,.
\end{equation}
Then the multiscale solution $u_H$ is formulated as
\begin{equation}\label{eqn:online}
u_H=\sum_p U_p\phi_p\,,
\end{equation}	
where the coefficient vector $U$ solves $(A+L)U=b$.

\subsection{Construction of $V_H$}\label{sec:basis}
The key to the success of our method is the construction of the space $V_H$ on the coarse mesh during the offline stage. We will give the details here. 

As discussed in Section~\ref{sec:GMsFEM}, the offline step is further decomposed into two substages: constructing the snapshot space, and selecting modes associated with small eigenvalues. These two stages will be presented in Section~\ref{sec:snap} and Section~\ref{sec:off} respectively.

In the snapshot space construction stage, in each coarse region, the Boltzmann solution will be solved multiple times together with all possible boundary conditions resolved by the fine grid. This give a high dimensional space. However, some modes in the snapshot space are more important than the others, and they dominate the numerical solution. To identify these basis functions, a specially designed local spectral problem (generalized eigenvalue problem GEP) is formulated and solved. The modes that correspond to the smallest eigenvalues are selected for form $V_H$. The number of modes to be selected depends on the error tolerance and the eigenvalues of the GEP. The design of the local spectral problem is to encode the convergence error that is to be discussed in Section~\ref{sec:analysis}.

\subsubsection{Snapshot Space} \label{sec:snap}
We present the construction of the snapshot spaces in this subsection. The procedure is the same in each coarse element, and we take the coarse element $K_j$ as an example. The snapshot space for this particular element is denoted by $V_\text{snap}^j$. We use the notation $J^i(D)$ to denote the set of all nodes of the fine mesh $\mathcal{T}^h$ lying in the upwind part of $\partial D$ associated with velocity $\mathbf{v}_i$. And we also use $J(D)=\bigoplus_{i=1}^m J^i(D)$ to denote the union.  Then the snapshot space is simply the linear span of solutions to the local Boltzmann equation with delta function as boundary condition, namely:
	\begin{equation}\label{def:V_snap_j}
	V^j_\text{snap} = \{\leftidx{_n}\psi_{l}^{j}: n=1,...,m,\;x_l\in J(K_j)\}\,,
	\end{equation}
	where $\leftidx{_n}\psi_{l}^{j} =(\leftidx{_n}\psi_{l,1}^{j},\leftidx{_n}\psi_{l,2}^{j},...,\leftidx{_n}\psi_{l,m}^{j})$ solves:
	\begin{align}\label{snapshot2}
	\left\{\begin{array}{rl}
	\mathbf{v}_i\cdot\nabla\leftidx{_n}\psi_{l,i}^{j}+\epsilon \leftidx{_n}\psi_{l,i}^{j}+\frac{1}{\epsilon a^\delta}\left( \leftidx{_n}\psi_{l,i}^{j}-\sum_{q=1}^m \alpha_q\leftidx{_n}\psi_{l,q}^{j} \right) =0 & \text{in}  \quad K_j \quad\text{ for all }i=1,2,...,m\,,\\ 
	\leftidx{_n}\psi_{l}^j = \delta_l\mathbf{e}_n& \text{on} \quad J(K_j)\,.
	\end{array}\right.
	\end{align}
	Here we use multi-index Kronecker delta function $\delta_l\mathbf{e}_n$, where $\mathbf{e}_n$ is the standard basis in $\mathbb{R}^m$ and $\delta_l$ is the standard Kronecker delta function:
\[
\delta_l(x_k)=\left\{\begin{matrix}
1, & k=l\\ 
0, & k\neq l
\end{matrix}\right.,\quad x_k\in J(K_j)\,.
\]
This strategy is summarized in Algorithm \textproc{DetLocal}.

The full snapshot space is given by
\begin{equation}\label{def:V_snap}
V_\text{snap}=\bigoplus_{j}V_\text{snap}^j\,.
\end{equation}

\begin{remark}
Numerically to prepare all snapshot basis functions is hard. It requires the computation of local Boltzmann equation with a large number of possible incoming delta functions. To reduce the cost of computation, we use the idea of oversampling \cite{efendiev2009multiscale}. To do so, the local computational domain is slightly enlarged to $K_j^+$ (see Figure \ref{partition}), and a collection of random boundary condition is imposed on $K_j^+$. The low rank structure of the solution space allows one to correctly capture the range, even with a limited sampling. In particular, we define the snapshot space
\begin{equation}\label{def:V_snap_j_oversampling}
V^j_\text{snap} = \{\leftidx{_n}\psi_{l}^{j,+}|_{K_j}: n=1,...,m,\;l=1,...,k_j\}\,,
\end{equation}
where $k_j$ is the number of snapshot functions we could customize, and $\leftidx{_n}\psi_{l}^{j,+} =(\leftidx{_n}\psi_{l,1}^{j,+},\leftidx{_n}\psi_{l,2}^{j,+},...,\leftidx{_n}\psi_{l,m}^{j,+})$ solves:
\begin{align}\label{snapshot2_oversampling}
\begin{array}{rl}
\mathbf{v}_i\cdot\nabla\leftidx{_n}\psi_{l,i}^{j,+}+\epsilon \leftidx{_n}\psi_{l,i}^{j,+}+\frac{1}{\epsilon a^\delta}\left( \leftidx{_n}\psi_{l,i}^{j,+}-\sum_{q=1}^m \alpha_q\leftidx{_n}\psi_{l,q}^{j,+} \right) =0 & \text{in}  \quad K_j^+ \quad\text{ for all }i=1,2,...,m\,,\\ 
\leftidx{_n}\psi^{j}_l = r_l\mathbf{e}_n & \text{on} \quad J(K^+_j)\,.
\end{array}
\end{align}
where $r_l$ are random i.i.d. Gaussian sampling on $J(K^+_j)$. The solutions $\leftidx{_n}\psi_l^{j,+}$ confined on $K_j$ are then used to form the snapshot spaces. We remark that the use of randomized boundary conditions on oversampling domains is able to reduce the offline computational cost as there is no need to impose delta function boundary conditions as in (\ref{snapshot2}).

This strategy is summarized in Algorithm \textproc{RanLocal}.
\end{remark}

Similar to~\eqref{eq:coupled_fine} and \eqref{eq:coupled_global}, we can solve the snapshot solution $u_\text{snap} \in V_{\text{snap}}$ by the following equation:
\begin{equation}\label{eq:coupled_snap}
a(u_\text{snap},w)+l(u_\text{snap},w)=F(w),\quad \forall w\in V_\text{snap}\,.
\end{equation}
We note that the snapshot solution can be considered as a reference solution. The error of the snapshot solution is related to the approximation property of the snapshot space in the fine scale space. 

\subsubsection{Offline Space}\label{sec:off}
Now, we will present the construction of the solution space $V_H$, with the property we mentioned in \eqref{eq:off1}. In the end $V_H$, when confined on each coarse element, say $K_j$, will be a subspace of $V^j_{\text{snap}}$, and the construction of $V_H$ is amount to finding the most appropriate basis functions in $V^j_\text{snap}$ to be included. The procedure is further divided into two sub-steps: the energy minimizing oversampling (EMO), and a design of a generalized eigenvalue problem, as used in \cite{chung2018emo}.

We first denote the local oversampled snapshot space $\bigoplus_{K_i\subset K_j^+}V_\text{snap}^i$ by $V_\text{snap}^{j,+}$. Notice that, for a given coarse element $K_j$ and its corresponding oversampling region $K_j^+$, the space $V_\text{snap}^{j,+}$ is the union of all snapshot spaces $V_\text{snap}^i$ with the condition that $K_i\subset K_j^+$.

	Then the energy minimizing snapshots are calculated. For any snapshot function $\psi\in V_\text{snap}^j$, its energy minimizing extension $\widetilde\psi$ has the smallest energy in some norm $a_{\text{Energy}}^j(\cdot,\cdot)$ and is sought in the local oversampled snapshot space $V_\text{snap}^{j,+}$ with the constraint  $\widetilde{\psi}|_{K_j}=\psi|_{K_j}$. In mathematical expression, for any $\psi\in V_\text{snap}^j$, we seek for $\widetilde\psi\in V_\text{snap}^{j,+}$ so that
\begin{align}\label{tilde}
\begin{split}
\widetilde{\psi}&=\text{argmin}_ {\widetilde{\phi}\in V_\text{snap}^{j,+}} a_{\text{Energy}}^j(\widetilde{\phi},\widetilde{\phi})\\
\text{s.t. }\quad \widetilde{\psi}&=\psi \quad \text{ in } K_j
\end{split}
\end{align}
in which
\begin{align}\label{eqn:energy_extension}
a_{\text{Energy}}^j(\widetilde{\phi},\widetilde{\phi})=\sum_{i=1}^m\alpha_i\left( \int_{K_j^+}|\nabla\widetilde{\phi}_i|^2+\frac{1}{H}\sum_{e\in \mathcal{E}_H^0(K_j^+)}\int_e  [\widetilde{\phi}_i]^2\right) +\int_{K_j^+}\frac{1}{\epsilon a^\delta}\sum_{i,l=1}^m a_{il}\widetilde{\phi}_l\widetilde{\phi}_i\,. 
\end{align}
We notice that this construction is well-defined and the strategy is summarized in Algorithm \textproc{EnergyMin}. As one can see, $\widetilde{\psi}$ is an extension of $\psi$ onto the oversampling domain that achieves the minimum energy, defined in~\eqref{eqn:energy_extension} and this extension is crucial, as will be seen in the later analysis.

\begin{remark}
	This is about a stable decomposition property. It is important that the local basis functions satisfy
	a stable decomposition property. More precisely, the sum of local energies is bounded by the global energy. 
\end{remark}
Next, we define the two bilinear operators $a_{K_j}(\cdot,\cdot)$ and $s_{K_j}(\cdot,\cdot)$, mentioned in \eqref{eq:off1}. For simplicity of notation, we use $a^j(\cdot,\cdot)$ and $s^j(\cdot,\cdot)$ instead. For the element $K_j$, define:
	\begin{align}\label{spectral}
	\begin{split}
	a^j(\phi,\eta)=&\sum_{i=1}^m\alpha_i\left( \int_{K_j^+}\nabla\widetilde{\phi}_i\cdot\nabla\widetilde{\eta}_i+\frac{1}{H}\sum_{e\in \mathcal{E}_H^0(K_j^+)}\int_e[\widetilde{\phi}_i][\widetilde{\eta}_i]\right) +\int_{K_j^+}\frac{1}{\epsilon a^\delta}\sum_{i,l=1}^m a_{il}\widetilde{\phi}_l\widetilde{\eta}_i\,,\\
	s^j(\phi,\eta)=&\sum_{i=1}^m\alpha_i\left( \frac{1}{2}\sum_{K\subset K_j^+}\int_{\partial K}\left | \mathbf{v}_i\cdot\mathbf{n} \right |\widetilde{\phi}_i\widetilde{\eta}_i+\int_{K_j^+}\epsilon \widetilde{\phi}_i\widetilde{\eta}_i\right) +\int_{K_j^+}\frac{1}{\epsilon a^\delta}\sum_{i,l=1}^m a_{il}\widetilde{\phi}_l\widetilde{\eta}_i\,.
	\end{split}
	\end{align}
	Using the above bilinear forms, a spectral problem is defined. On $K_j$, we look for $\left( \phi_{k}^j,\lambda_{k}^j\right) \in V_\text{snap}^j\times \mathbb{R}$ such that
	\[
	a^j(\phi_{k}^j,\eta)=\lambda s^j(\phi_{k}^j,\eta), \;\;\;\forall \eta\in V_\text{snap}^{j}\,
	\]
	where the eigenvalues are ordered in the ascending way:
	\[
	\lambda_{j,1}\leq\lambda_{j,2}\leq\cdots\,.
	\]
	For implementation, we define the following matrices
	\begin{equation}\label{eqn:A_S_matrix}
	A_{pq}^j = a^j(\psi_{p}^j,\psi_{q}^j)\,,\quad\text{and}\quad S_{pq}^j = s^j(\psi_{p}^j,\psi_{q}^j)\,.
	\end{equation}
	Then the pair $\left( \phi_{k}^j,\lambda_{k}^j\right)$ is computed by solving
	\begin{equation}\label{eqn:GEP}
	A^j\mathbf{c}_k = \lambda_k S^j\mathbf{c}_k\,,\quad \text{with}\quad\phi_{k}^j = \sum_{p}{c}_{k,p}{\psi}_{p}^j\,.
	\end{equation}
	Suppose $L_j$ modes are used for each $K_j$. This strategy is summarized in Algorithm \textproc{LocalGEP}. The offline space $V_H$ is given by
	\begin{equation}\label{def:V_H}
	V_H^j = \text{span}\{\phi_{k}^j:k=1\cdots L_j\}\,,\quad \text{and}\quad V_H=\bigoplus_{j}V_H^j\,.
	\end{equation}
	This will be the approximation space for solving the system~\eqref{main} in the scheme~\eqref{eq:coupled_global}. 

	\subsection{Algorithm summary}
We finally summarize the algorithm. Largely speaking, we prepare the basis functions in the offline step, and patch them up in the online step. The offline step is further divided into preparing a snapshot space in which either all Green's functions are accumulated or a good random selection is obtained, and the basis selection step, in which the local generalized eigenvalue problem is computed and eigenfunctions with highest energies are chosen. These basis functions are ultimately used in the online step via the weak formulation~\eqref{eqn:online}. We summarize the procedure in Algorithm~\ref{alg:summary}.
\begin{algorithm}
  \caption{Multiscale solver for $\mathcal{L}u=0$ over $\Omega$ with $u = g$ on $\Gamma^-$}\label{alg:summary}
  \begin{algorithmic}[1]
    \State \textbf{Domain Decomposition}
    \Indent
    \State Partition domain into non-overlapping patches $\Omega= \bigcup_j K_j$. 
    \EndIndent
    \State \textbf{Offline Stage:}
    \Indent
    \State \textbf{Snapshot Space}
    \Indent
    \State Form snapshot space by calling $V^j_\text{snap}$=\textproc{DetLocal}($K_j$) or $V^j_\text{snap}$=\textproc{RanLocal}($K_j^+$).
    \EndIndent
    \State \textbf{Offline Space}
    \Indent
    \State Form offline space by calling $V^j_H$=\textproc{LocalGEP}($K_j$).
    \State $V_H=\bigoplus_{j}V_H^j=\text{span}\{\phi_p\}$.
    \EndIndent
    \EndIndent
    \State \textbf{Online Stage:}
    \Indent
    \State Use global inflow boundary data $g$ to determine $U$ using~\eqref{eqn:online}.
    \EndIndent
    \State \textbf{Return:} approximated global solution $u_H=\sum_p U_p\phi_p$.
  \end{algorithmic}
  \hrulefill
  \begin{algorithmic}[1]
    \Function{DetLocal}{$K_j$}
    \State	Prepare full list of multi-index Kronecker delta function $\delta_l\mathbf{e}_n$ on $J(K_j)$.
    \State Find $\leftidx{_n}\psi_{l}^{j}$ using~\eqref{snapshot2}.
    \State  Formulate $V^j_\text{snap} = \{\leftidx{_n}\psi_{l}^{j}: n=1,...,m,\;x_l\in J(K_j)\}$ according to~\eqref{def:V_snap_j}.
    \State  \textbf{Return:} Local snapshot space $V^j_\text{snap}$.
    \EndFunction
  \end{algorithmic}
  \hrulefill
  \begin{algorithmic}[1]
    \Function{RanLocal}{$K_j$}
    \State	Prepare $k_j$ random i.i.d.~Gaussian vector $r_l\mathbf{e}_n$ on $J(K_j^+)$.
    \State Find $\leftidx{_n}\psi_{l}^{j,+}$ using~\eqref{snapshot2_oversampling}.
    \State  Formulate $V^j_\text{snap} = \{\leftidx{_n}\psi_{l}^{j,+}|_{K_j}: n=1,...,m,\;l=1,...,,k_j\}$ according to~\eqref{def:V_snap_j}.
    \State  \textbf{Return:} Local snapshot space $V^j_\text{snap}$.
    \EndFunction
  \end{algorithmic}
  \hrulefill
  \begin{algorithmic}[1]
	\Function{LocalGEP}{$K_j$}
	\State $V_\text{snap}^{j,+}=\bigoplus_{K_i\subset K_j^+}V_\text{snap}^i$.
	\State Compute $\widetilde{\psi}\in V_\text{snap}^{j,+}$ using~\eqref{tilde}.
	\State Solve the generalized eigenvalue problem~\eqref{eqn:GEP} for $V_H^j = \text{span}\{\phi_{k}^j:k=1\cdots L_j\}$.
	\State \textbf{Return:} Offline space $V_H^j$.  \EndFunction
  \end{algorithmic}
\end{algorithm}

	\section{Analysis of the GMsFEM}\label{sec:analysis}
In this section, we will present some analysis of our GMsFEM.
In Section \ref{sec:well}, we will prove the well-posedness of the discrete system resulting from the GMsFEM, and 
in Section \ref{sec:conv}, we will prove the convergence of the method. 
Finally, in Section \ref{sec:small}, we will analyze the behavior of the method when $\epsilon$ is small.

\subsection{Well-posedness}\label{sec:well}
We first show the well-posedness of the GMsFEM~\eqref{eq:coupled_global}.
\begin{theorem}\label{wellposedness}
	Problem~\eqref{eq:coupled_global} has a unique solution, and the solution $u_H$ satisfies the following stability condition
	\begin{align}\label{eqn:wellposed_stability}
	\sum_{i=1}^{m}\alpha_i\left( \frac{1}{4}\sum_{e\in \mathcal{E}_H}\int_e\left | \mathbf{v}_i\cdot\mathbf{n} \right |[u_{H,i}]^2+\int_{\Omega}\epsilon u_{H,i}^2\right) +\int_\Omega\frac{1}{\epsilon a^\delta}\sum_{i,j=1}^m a_{ij}u_{H,j}u_{H,i} \leq\sum_{i=1}^{m}\alpha_i\sum_{e\in \Gamma^-}\int_e\left | \mathbf{v}_i\cdot\mathbf{n} \right |g_i^2\,.
	\end{align}
\end{theorem}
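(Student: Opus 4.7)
The natural strategy is to establish coercivity of the bilinear form $a(\cdot,\cdot)+l(\cdot,\cdot)$ in a norm that matches the left-hand side of~\eqref{eqn:wellposed_stability}, then use Young's inequality on $F(u_H)$ to absorb the resulting boundary contribution. Since the discrete problem~\eqref{eq:coupled_global} lives on the finite-dimensional space $V_H$, the stability estimate will immediately yield uniqueness, and uniqueness will give existence. I would start by testing~\eqref{eq:coupled_global} with $w=u_H$, which gives the identity $a(u_H,u_H)+l(u_H,u_H)=F(u_H)$; the remainder of the argument reduces to estimating the three terms in this identity.

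The main step is the coercivity of $a$, which is the place where the choice of upwind flux pays off. For each velocity index $i$ and each coarse element $K$, I would integrate by parts to rewrite $-\int_K u_{H,i}\nabla u_{H,i}\cdot\mathbf{v}_i = -\tfrac12\int_{\partial K} u_{H,i}^2\,\mathbf{v}_i\cdot\mathbf{n}_K$, and sum over $K$. On each interior edge $e\in\mathcal{E}_H^0$ this produces $-\tfrac12(u^+_{H,i})^2+\tfrac12(u^-_{H,i})^2$ weighted by $\mathbf{v}_i\cdot\mathbf{n}^+$ (with $\mathbf{n}^+$ the outward normal from the upwind side). Combining this with the upwind jump term $u_{H,i}^+[u_{H,i}]\cdot\mathbf{v}_i=u_{H,i}^+(u_{H,i}^+-u_{H,i}^-)(\mathbf{v}_i\cdot\mathbf{n}^+)$ collapses everything into the perfect square $\tfrac12|\mathbf{v}_i\cdot\mathbf{n}|[u_{H,i}]^2$. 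The $\Gamma^+$ and $\Gamma^-$ contributions are handled similarly (on $\Gamma^-$ only the volume term contributes, but since $\mathbf{v}_i\cdot\mathbf{n}<0$ it comes with the right sign), and because $[u_{H,i}]^2=u_{H,i}^2$ on boundary edges I obtain
\[
a(u_H,u_H)=\tfrac{1}{2}\sum_{i=1}^m\alpha_i\sum_{e\in\mathcal{E}_H}\int_e|\mathbf{v}_i\cdot\mathbf{n}|\,[u_{H,i}]^2.
\]
The bilinear form $l$ is then bounded below by invoking Proposition~\ref{prop:semi-definite}, which gives $\int_\Omega\tfrac{1}{\epsilon a^\delta}\sum a_{ij}u_{H,j}u_{H,i}\geq0$, and the remaining $\sum_i\alpha_i\int_\Omega\epsilon u_{H,i}^2$ is already of the form required in~\eqref{eqn:wellposed_stability}.

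For the right-hand side, $F(u_H)=\sum_i\alpha_i\sum_{e\in\Gamma^-}\int_e|\mathbf{v}_i\cdot\mathbf{n}|\,g_i u_{H,i}$ (using $\mathbf{v}_i\cdot\mathbf{n}<0$ on $\Gamma^-$). Applying Young's inequality in the form $g_i u_{H,i}\leq g_i^2+\tfrac{1}{4}u_{H,i}^2$, I can bound $F(u_H)$ by the target inflow term $\sum_i\alpha_i\sum_{\Gamma^-}\int_e|\mathbf{v}_i\cdot\mathbf{n}|g_i^2$ plus $\tfrac14\sum_i\alpha_i\sum_{\Gamma^-}\int_e|\mathbf{v}_i\cdot\mathbf{n}|u_{H,i}^2$. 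The second piece is absorbed on the left using the $\tfrac12$ coefficient already there on $\Gamma^-\subset\mathcal{E}_H$, leaving a coefficient of $\tfrac14$ on every boundary jump, which is exactly the coefficient appearing in~\eqref{eqn:wellposed_stability}. This yields the stability estimate.

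Finally, well-posedness follows at no extra cost. Setting $g\equiv 0$ in the stability estimate forces every nonnegative term on the left to vanish; the term $\sum_i\alpha_i\int_\Omega\epsilon u_{H,i}^2=0$ immediately gives $u_H\equiv 0$, so the homogeneous problem admits only the trivial solution. Since $V_H$ is finite-dimensional, this injectivity of the linear map $u_H\mapsto a(u_H,\cdot)+l(u_H,\cdot)$ is equivalent to surjectivity, so~\eqref{eq:coupled_global} has a unique solution for every inflow datum $g$. The one place where I expect to be careful is the algebraic bookkeeping in the integration-by-parts step: the cancellations between volume contributions, interior upwind fluxes, and outflow boundary terms must line up exactly for the jump-squared form to emerge, and the Young's inequality constant must be chosen to leave the final coefficient on $\mathcal{E}_H$ at least $\tfrac14$ uniformly across interior and boundary edges.
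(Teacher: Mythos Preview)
Your proposal is correct and follows essentially the same approach as the paper: the integration-by-parts identity $a(u,u)=\tfrac12\sum_i\alpha_i\sum_{e\in\mathcal{E}_H}\int_e|\mathbf{v}_i\cdot\mathbf{n}|[u_i]^2$, nonnegativity of $l$ via Proposition~\ref{prop:semi-definite}, Young's inequality on $F(u_H)$, and finite-dimensionality for existence from uniqueness all match the paper's argument. Your bookkeeping on the Young step is slightly tighter (you absorb only the $\Gamma^-$ contribution rather than relaxing to all of $\mathcal{E}_H$ first, as the paper does), but this is a cosmetic difference that leads to the same final estimate.
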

\begin{proof}
	Since the system \eqref{eq:coupled_global} is a square linear system, showing the existence and uniqueness is amount to proving that $a(\widehat{u},w)+l(\widehat{u},w)=0$ for all $w\in V_H$ only for trivial solution $\widehat{u}=0$.
	
	We will first prove the following inequalities
	\begin{equation*}
	l(u,u)\geq 0\,,\quad\text{and}\quad a(u,u) = \sum_ia(u_i,u_i)\geq 0\,,\quad\forall u\in V_H\,.
	\end{equation*}
	First, $l$ is non-negative since the matrix $(a_{ij})$ is a positive semi-definite matrix, as discussed in Proposition~\ref{prop:semi-definite}. Next, the non-negativity of $a(\cdot,\cdot)$ is shown below:
	\begin{align}\label{a_and_V}
	\begin{split}
	a(u_i,u_i)=&-\int_\Omega u_i\nabla u_i\cdot\mathbf{v}_i +
	\sum_{e\in\mathcal{E}_H^0}\int_e u_i^+[u_i]\cdot\mathbf{v}_i +\sum_{e\in{\Gamma^+}}\int_e u_i^2\mathbf{v}_i\cdot\mathbf{n}\\
	=&- \frac{1}{2}\sum_{\tau\in \mathcal{T}^h}\int_{\partial \tau}u_i^2\mathbf{v}_i\cdot\mathbf{n}+\sum_{e\in\mathcal{E}_H^0}\int_e u_i^+[u_i]\cdot\mathbf{v}_i+\sum_{e\in{\Gamma^+}}\int_e u_i^2\mathbf{v}_i\cdot\mathbf{n}\\
	=&-\frac{1}{2}\sum_{e\in{\Gamma^+}}\int_e\left| \mathbf{v}_i\cdot\mathbf{n}\right|u_i^2+\frac{1}{2}\sum_{e\in{\Gamma^-}}\int_e\left| \mathbf{v}_i\cdot\mathbf{n}\right|u_i^2+\frac{1}{2}\sum_{e\in\mathcal{E}_H^0}\int_e\left| \mathbf{v}_i\cdot\mathbf{n}\right|\left({u_i^-} ^2-{u_i^+} ^2\right) \\
	&+\sum_{e\in\mathcal{E}_H^0}\int_e \left| \mathbf{v}_i\cdot\mathbf{n}\right|u_i^+\left( u_i^+-u_i^-\right) +\sum_{e\in{\Gamma^+}}\int_e \left| \mathbf{v}_i\cdot\mathbf{n}\right|u_i^2\\
	=&\frac{1}{2}\sum_{e\in{\Gamma^+}}\int_e\left| \mathbf{v}_i\cdot\mathbf{n}\right|u_i^2+\frac{1}{2}\sum_{e\in{\Gamma^-}}\int_e\left| \mathbf{v}_i\cdot\mathbf{n}\right|u_i^2+\frac{1}{2}\sum_{e\in\mathcal{E}_H^0}\int_e\left| \mathbf{v}_i\cdot\mathbf{n}\right|\left({u_i^+}-{u_i^-}\right)^2\\
	=&\frac{1}{2}\sum_{e\in \mathcal{E}_H}\int_e\left | \mathbf{v}_i\cdot\mathbf{n} \right |[u_i]^2\geq 0.
	\end{split}
	\end{align}
	
	Assuming $a(\widehat{u},w)+l(\widehat{u},w)=0$ for any $w\in (V_h)^m$, then setting $w=\widehat{u}$, we have
	\begin{equation}\label{th1_1}
	\sum_{i=1}^{m}\alpha_i\left( \frac{1}{2}\sum_{e\in \mathcal{E}_H}\int_e\left | \mathbf{v}_i\cdot\mathbf{n} \right |[\widehat{u_i}]^2 +\int_{\Omega}\epsilon \widehat{u_i}^2\right) +\int_\Omega\frac{1}{\epsilon a^\delta}\sum_{i,j=1}^m a_{ij} \widehat{u_j} \widehat{u_i}= 0.
	\end{equation}
	According to Proposition~\ref{prop:semi-definite}, we have
	\begin{equation}
	\widehat{u_1}=\widehat{u_2}=\cdots=\widehat{u_m}=0\,,
	\end{equation}
	meaning $\widehat{u}=0$, and the solution to~\eqref{eq:coupled_global} is thus unique. For stability, we start with
	\begin{align*}
	F(u_H)=&-\sum_{i=1}^{m}\alpha_i\sum_{e\in{\Gamma^-}}\int_e g_i u_{H,i}\mathbf{v}_i\cdot\mathbf{n}\\
	\leq&\sum_{i=1}^{m}\alpha_i\sum_{e\in \Gamma^-}\int_e\left | \mathbf{v}_i\cdot\mathbf{n} \right |g_i^2+\frac{1}{4}\sum_{i=1}^{m}\alpha_i\sum_{e\in \mathcal{E}_H}\int_e\left | \mathbf{v}_i\cdot\mathbf{n} \right |[u_{H,i}]^2\,.
	\end{align*}
	Considering $a(u_H,u_H)+l(u_H,u_H) = F(u_H)$, we conclude with the stability inequality~\eqref{eqn:wellposed_stability}.
\end{proof}

We notice that the snapshot equation~\eqref{eq:coupled_snap} has the same structure, and the wellposedness is proved in the same way.

\subsection{Convergence Analysis}\label{sec:conv}
We now analyze the convergence of the proposed method. The goal of this section is to estimate the difference between the snapshot solution, $u_\text{snap}$, computed in~\eqref{eq:coupled_snap}, and the multiscale coarse solution, $u_H$, computed in~\eqref{eq:coupled_global}. To do so, we first define the following norms. We define the $V$-norm as:
\begin{equation}\label{def:V_norm}
\left \| u \right \|_V^2=\sum_{i=1}^m\alpha_i \left \| u_i \right \|_{V^i}^2\,,
\quad\text{with}\quad	\left \| u_i \right \|_{V^i}^2=\frac{1}{2} \sum_{e\in \mathcal{E}_H}\int_e\left | \mathbf{v}_i\cdot\mathbf{n} \right |[u_i]^2\,,
\end{equation}
and $W$-norm as:
\begin{equation}\label{def:W_norm}
\left \| u \right \|_W^2=\sum_{i=1}^m\alpha_i \left \| u_i \right \|_{W^i}^2\,,\quad\text{with}\quad	\left\| u_i\right\| _{W^i}^2=\frac{1}{2}\sum_{K_j}\int_{\partial K_j}\left| \mathbf{v}_i\cdot\mathbf{n}\right| u_i^2\,.
\end{equation}
We also extend them by incorporating the collision term:
\begin{equation}\label{def:tilde_norm}
\left\|u \right\|^2_{\widetilde{V}}=\left\|u \right\|^2_{V}+l(u,u)\,,\quad\text{and}\quad	\left\|u \right\|^2_{\widetilde{W}}=\left\|u \right\|^2_{W}+l(u,u)\,.
\end{equation}
The total energy is now defined by:
\begin{equation*}
\left \| u \right \|_\text{Energy}^2=\sum_{i=1}^m\alpha_i\left( \int_{\Omega}|\nabla u_i|^2+ \frac{1}{H}\sum_{e\in \mathcal{E}^0_H}\int_e[u_i]^2\right)+\int_\Omega\frac{1}{\epsilon a^\delta}\sum_{i,j=1}^m a_{ij} {u_j} {u_i}\,.
\end{equation*}
Note that we have following propositions

\begin{proposition}\label{a_and_V_proposition}
	$a(u,u) = \|u\|^2_{{V}},$ and ~$a(u,u)+l(u,u) = \|u\|^2_{\widetilde{V}}.$
\end{proposition}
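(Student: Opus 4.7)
The plan is to observe that the first equality has essentially already been carried out inside the well-posedness proof: the chain of manipulations in \eqref{a_and_V} shows, purely by integration by parts on each fine element and bookkeeping of the upwind flux contributions across interior coarse edges, inflow edges, and outflow edges, that
\[
a_i(u_i,u_i) = \frac{1}{2}\sum_{e\in \mathcal{E}_H}\int_e\left|\mathbf{v}_i\cdot\mathbf{n}\right|[u_i]^2 = \|u_i\|_{V^i}^2.
\]
So the first step is simply to quote this identity from \eqref{a_and_V}, weight both sides by $\alpha_i$, and sum over $i=1,\dots,m$, producing
\[
a(u,u) = \sum_{i=1}^m \alpha_i\, a_i(u_i,u_i) = \sum_{i=1}^m \alpha_i\, \|u_i\|_{V^i}^2 = \|u\|_V^2,
\]
which is the first claim.

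For the second claim, the plan is to add $l(u,u)$ to both sides and invoke the definition in \eqref{def:tilde_norm}, giving immediately
\[
a(u,u) + l(u,u) = \|u\|_V^2 + l(u,u) = \|u\|_{\widetilde{V}}^2.
\]
There is no genuine obstacle here: the only substantive content is the telescoping of fine-element boundary integrals into coarse-edge jump integrals performed in \eqref{a_and_V}, and that computation has already been carried out. If anything, the one thing worth double-checking is that the cancellation in \eqref{a_and_V} relies on $u \in V_h$ so that $v \cdot \nabla u$ is piecewise defined and integration by parts on each fine cell $\tau$ is legitimate; since Proposition~\ref{a_and_V_proposition} will be applied only to functions in $V_h$ (and its subspaces $V_{\text{snap}}$, $V_H$), this regularity is automatic and the identities hold verbatim.
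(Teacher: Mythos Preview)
Your proposal is correct and matches the paper's own proof exactly: the paper simply writes ``This proposition simply comes from the calculations in \eqref{a_and_V},'' which is precisely what you do---quote the identity $a_i(u_i,u_i)=\|u_i\|_{V^i}^2$ from \eqref{a_and_V}, sum with weights $\alpha_i$, and then add $l(u,u)$ and invoke \eqref{def:tilde_norm}.
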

\begin{proof}
	This proposition simply comes from the calculations in \eqref{a_and_V}.
\end{proof}

\begin{proposition}\label{norm_connection}
	If $u \in V_\text{snap}$, we have
	\begin{align}
	1.\quad&\|u\|^2_{\widetilde{W}}\leq \sum_{j} s^j(u|_{K_j},u|_{K_j}),\label{sj_and_W}\\
	2.\quad&\sum_{j} a^j(u|_{K_j},u|_{K_j})\leq M\|u\|^2_\text{Energy}.\label{aj_and_Energy}
	\end{align}
	Here $a^j$ and $s^j$ are bilinear operator defined in~\eqref{spectral}. $M=\max_{K,E} \{M_K,M_E\}$ where $M_K$ is the number of oversampled regions $K_j^+$'s which have nonempty intersection with coarse block $K$, and $M_E$ is the number of oversampled regions $K_j^+$'s whose interior coarse edges $\mathcal{E}_H^0(K_j^+)$ contains coarse edge $E$. They are both small numbers.
\end{proposition}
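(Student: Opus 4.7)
The plan is to exploit two structural facts throughout. First, by the defining constraint in \eqref{tilde}, the energy-minimizing extension $\widetilde{u}$ of $u|_{K_j}$ into the oversampling region satisfies $\widetilde{u}|_{K_j}=u|_{K_j}$. Second, by Proposition~\ref{prop:semi-definite} the matrix $(a_{ij})$ is positive semi-definite, so the pointwise integrand $\sum_{i,l}a_{il}\widetilde{u}_l\widetilde{u}_i$ is non-negative, and every other term appearing in $a^j$ and $s^j$ is built from squares weighted by non-negative quantities such as $|\mathbf{v}_i\cdot\mathbf{n}|$, $\epsilon$, and $\tfrac{1}{\epsilon a^\delta}$. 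Both parts of the proposition will then reduce to either monotonicity in the domain of integration or minimality in the extension problem.

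For inequality \eqref{sj_and_W}, I would use a domain-monotonicity argument. Since $K_j$ is itself one of the coarse blocks in the sum $\sum_{K\subset K_j^+}$ appearing in $s^j$, dropping all terms except those on $K_j$ (and on $\partial K_j$) can only decrease $s^j$, by the pointwise non-negativity noted above. On the retained pieces, $\widetilde{u}$ coincides with $u$, so the kept surface term equals $\tfrac{1}{2}\int_{\partial K_j}|\mathbf{v}_i\cdot\mathbf{n}|u_i^2$, the kept $\epsilon$-mass equals $\int_{K_j}\epsilon u_i^2$, and the kept collision term equals the $K_j$-contribution to $l(u,u)$. Summing the resulting lower bounds over $j$ reconstructs exactly $\|u\|_W^2+l(u,u)=\|u\|_{\widetilde W}^2$, proving the first claim.

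For inequality \eqref{aj_and_Energy}, I would first match the two quadratic forms: comparing \eqref{spectral} with \eqref{eqn:energy_extension} shows $a^j(u|_{K_j},u|_{K_j})=a_{\text{Energy}}^j(\widetilde{u},\widetilde{u})$. The minimality defining $\widetilde{u}$ in \eqref{tilde} then yields, for any admissible competitor $\phi\in V_\text{snap}^{j,+}$ satisfying $\phi|_{K_j}=u|_{K_j}$, the inequality $a^j(u|_{K_j},u|_{K_j})\leq a_{\text{Energy}}^j(\phi,\phi)$. The natural choice is $\phi=u|_{K_j^+}$: since $u\in V_\text{snap}=\bigoplus_{i}V_\text{snap}^i$, its restriction to the union $K_j^+=\bigcup_{K_i\subset K_j^+}K_i$ lies in $V_\text{snap}^{j,+}=\bigoplus_{K_i\subset K_j^+}V_\text{snap}^i$ and trivially matches $u$ on $K_j$. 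Summing the resulting upper bound over $j$ and switching the order of summation produces overlap counts $\sum_j \mathbf{1}_{K\subset K_j^+}$ over coarse blocks $K$ (for the gradient term and the collision term) and $\sum_j \mathbf{1}_{e\in\mathcal{E}_H^0(K_j^+)}$ over coarse edges $e$ (for the jump term); these are bounded by $M_K$ and $M_E$ respectively, which recovers the factor $M$ on the right-hand side.

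The main obstacle I expect is the admissibility check for \eqref{aj_and_Energy}: one must invoke the direct-sum structure of $V_\text{snap}$ to ensure $u|_{K_j^+}\in V_\text{snap}^{j,+}$ so that the Dirichlet-type minimality of $\widetilde{u}$ actually applies. Once that is in place, the counting step requires only a clean statement of the bounded-overlap property of the oversampling partition, which is essentially geometric; part \eqref{sj_and_W} is then a short direct computation using non-negativity.
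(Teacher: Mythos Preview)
Your proposal is correct and follows essentially the same route as the paper's own proof: for \eqref{sj_and_W} both arguments use $\widetilde{u}|_{K_j}=u|_{K_j}$ together with the pointwise non-negativity of every term in $s^j$ to drop the contributions outside $K_j$, and for \eqref{aj_and_Energy} both invoke the minimality in \eqref{tilde} against the competitor $u|_{K_j^+}\in V_\text{snap}^{j,+}$ followed by the bounded-overlap counting. Your explicit flagging of the admissibility check $u|_{K_j^+}\in V_\text{snap}^{j,+}$ is a point the paper simply asserts, so your write-up is if anything slightly more careful.
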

\begin{proof}
	We denote $u|_{K_j}$ by $u^j$. So $u^j\in V_\text{snap}^j$. According to~\eqref{tilde}, $u^j$ has an energy minimizing extension $\widetilde{u}^j\in V_\text{snap}^{j,+}$ that satisfies $\widetilde{u}^j=u^j$ in $K_j$. Then we have
	\begin{align*}
	&\sum_{i=1}^m\alpha_i\left( \frac{1}{2}\sum_{K_j}\int_{\partial K_j}\left | \mathbf{v}_i\cdot\mathbf{n} \right |(u_i^j)^2+\int_{K_j}\epsilon (u_i^j)^2\right) +\int_{K_j}\frac{1}{\epsilon a^\delta}\sum_{i,l=1}^m a_{il}{u_l^j}{u_i^j}\\
	=&\sum_{i=1}^m\alpha_i\left( \frac{1}{2}\sum_{K_j}\int_{\partial K_j}\left | \mathbf{v}_i\cdot\mathbf{n} \right |(\widetilde{u}_i^j)^2+\int_{K_j}\epsilon (\widetilde{u}_i^j)^2\right) +\int_{K_j}\frac{1}{\epsilon a^\delta}\sum_{i,l=1}^m a_{il}{\widetilde{u}_l^j}{\widetilde{u}_i^j}\\
	\leq&\sum_{i=1}^m\alpha_i\left( \frac{1}{2}\sum_{K\subset K_j^+}\int_{\partial K}\left | \mathbf{v}_i\cdot\mathbf{n} \right |(\widetilde{u}_i^j)^2+\int_{K_j^+}\epsilon (\widetilde{u}_i^j)^2\right) +\int_{K_j^+}\frac{1}{\epsilon a^\delta}\sum_{i,l=1}^m a_{il}{\widetilde{u}_l^j}{\widetilde{u}_i^j}\\
	=&s^j(u^j,u^j).
	\end{align*}
	Combining with the definition of $\left\|\cdot \right\|^2_{\widetilde{W}}$, we proved \eqref{sj_and_W}.
	
	Next, we denote $u^{j,+}=u|_{K_j^+}\in V_\text{snap}^{j,+}$. By the definition of the energy minimizing extension in \eqref{tilde}, we have
	\begin{align*}
	a^j(u^j,u^j)=&\sum_{i=1}^m\alpha_i\left( \int_{K_j^+}\left| \nabla\widetilde{u_i^j}\right| ^2+\frac{1}{H}\sum_{e\in \mathcal{E}_H^0(K_j^+)}\int_e[\widetilde{u_i^j}]^2\right) +\int_{K_j^+}\frac{1}{\epsilon a^\delta}\sum_{i,l=1}^m a_{il}\widetilde{u_l^j}\widetilde{u_i^j}\,,\\
	\leq&\sum_{i=1}^m\alpha_i\left( \int_{K_j^+}\left| \nabla{u_i^{j,+}}\right| ^2+\frac{1}{H}\sum_{e\in \mathcal{E}_H^0(K_j^+)}\int_e[{u_i^{j,+}}]^2\right) +\int_{K_j^+}\frac{1}{\epsilon a^\delta}\sum_{i,l=1}^m a_{il}{u_l^{j,+}}{u_i^{j,+}}.
	\end{align*}
	Hence,
	\begin{align*}
	\sum_j a^j(u^j,u^j)
	\leq&\sum_j \sum_{i=1}^m\alpha_i\left( \int_{K_j^+}\left| \nabla{u_i^{j,+}}\right| ^2+\frac{1}{H}\sum_{e\in \mathcal{E}_H^0(K_j^+)}\int_e[{u_i^{j,+}}]^2\right) +\sum_j \int_{K_j^+}\frac{1}{\epsilon a^\delta}\sum_{i,l=1}^m a_{il}{u_l^{j,+}}{u_i^{j,+}}\\
	\leq& \sum_{i=1}^m\alpha_i\left( \sum_jM_{K_j}\int_{K_j}\left| \nabla{u_i^{j}}\right| ^2+\frac{1}{H}\sum_{e\in \mathcal{E}_H^0}M_e\int_e[{u_i^{j}}]^2\right) +\sum_j M_{K_j}\int_{K_j}\frac{1}{\epsilon a^\delta}\sum_{i,l=1}^m a_{il}{u_l^{j}}{u_i^{j}}\\
	\leq&M\|u\|^2_\text{Energy}\,,
	\end{align*}
	and thus we have~\eqref{aj_and_Energy}.
\end{proof}

For the convergence analysis, we first examine the best approximation property. For that, we have the following:
\begin{lemma}	\label{lemma1}
	Let $u_\text{snap}$ be the snapshot solution to the equation (\ref{eq:coupled_snap}) and let $u_H$ be the multiscale solution to the equation (\ref{eq:coupled_global}). Then:
	\begin{equation}\label{eqn:best_approx}
	\left \| u_\text{snap}-u_H \right \|_{\widetilde{V}}^2\leq C\inf_{w\in V_H}\left \| u_\text{snap}-w \right \|_{\widetilde{W}}^2\,,
	\end{equation}
	where $C$ is a constant independent of $\epsilon$, $a^\delta$ and the mesh size.
\end{lemma}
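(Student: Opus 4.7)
The plan is to follow a standard Cea-type argument in the natural energy and residual norms. Since $V_H\subset V_\text{snap}$, subtracting~\eqref{eq:coupled_global} from~\eqref{eq:coupled_snap} gives Galerkin orthogonality
\begin{equation*}
a(u_\text{snap}-u_H,v)+l(u_\text{snap}-u_H,v)=0,\qquad \forall v\in V_H.
\end{equation*}
Setting $e:=u_\text{snap}-u_H$ and choosing $v=u_H-w\in V_H$ for an arbitrary $w\in V_H$, this identity rearranges to
\begin{equation*}
a(e,e)+l(e,e)=a(e,u_\text{snap}-w)+l(e,u_\text{snap}-w).
\end{equation*}
Proposition~\ref{a_and_V_proposition} already identifies the left-hand side with $\|e\|_{\widetilde{V}}^{2}$, so the lemma would follow from a continuity bound
\begin{equation*}
|a(u,\eta)+l(u,\eta)|\le C\,\|u\|_{\widetilde{V}}\,\|\eta\|_{\widetilde{W}},\qquad u,\eta\in V_\text{snap},
\end{equation*}
after which a Cauchy-Schwarz step, a division by $\|e\|_{\widetilde{V}}$, and an infimum over $w\in V_H$ close the argument.

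For the collision piece I would simply observe that $l(\cdot,\cdot)$ is the sum of two symmetric positive semi-definite bilinear forms: the $\epsilon$-weighted $L^{2}$ pairing $\sum_{i}\alpha_{i}\int_{\Omega}\epsilon u_{i}\eta_{i}$, and the form associated with the matrix $(a_{ij})$, whose non-negativity is established in Proposition~\ref{prop:semi-definite}. A Cauchy-Schwarz applied to each PSD component then yields
\begin{equation*}
|l(u,\eta)|\le l(u,u)^{1/2}\,l(\eta,\eta)^{1/2}\le \|u\|_{\widetilde{V}}\,\|\eta\|_{\widetilde{W}},
\end{equation*}
since by~\eqref{def:tilde_norm} both extended norms dominate $l$.

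For the transport part $a(u,\eta)$, I would rewrite each $a_i(u_i,\eta_i)$ via element-wise integration by parts. Because $u_i,\eta_i\in V_h$ are continuous inside every coarse block $K_j$, all fine-interior edge contributions cancel pairwise and only coarse-edge and $\partial\Omega$ traces survive. Repeating the upwind algebraic manipulation used in~\eqref{a_and_V}, but now with two different arguments instead of $u_i$ twice, the surviving edge terms reorganize into a finite sum of products of a jump of $u_i$ against a one-sided trace of $\eta_i$, each weighted by $|\mathbf{v}_i\cdot\mathbf{n}|^{1/2}$. A single Cauchy-Schwarz edge by edge then yields $|a_i(u_i,\eta_i)|\le C\|u_i\|_{V^i}\|\eta_i\|_{W^i}$, and summing over $i$ with weights $\alpha_i$ produces $|a(u,\eta)|\le C\|u\|_V\|\eta\|_W\le C\|u\|_{\widetilde{V}}\|\eta\|_{\widetilde{W}}$.

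The hard part will be precisely this last step: after integration by parts, the expression for $a_i(u_i,\eta_i)$ contains a residual volume contribution of the form $\sum_{\tau}\int_\tau \eta_i\,\mathbf{v}_i\cdot\nabla u_i$, which has to be combined with the upwind coarse-edge boundary contributions and collapsed into $L^{2}(\partial K_j)$-type quantities, rather than left as a volume gradient norm of $u_i$ which is not controlled by $\|\eta\|_{W}$. Completing this algebra is the only non-routine step. Once continuity is in hand, combining it with Galerkin orthogonality and Proposition~\ref{a_and_V_proposition} gives $\|e\|_{\widetilde{V}}^{2}\le C\|e\|_{\widetilde{V}}\|u_\text{snap}-w\|_{\widetilde{W}}$; dividing by $\|e\|_{\widetilde{V}}$ and taking the infimum over $w\in V_H$ yields~\eqref{eqn:best_approx}, with a constant $C$ depending only on the coarse-partition geometry and independent of $\epsilon$, $a^{\delta}$, and the mesh size, since every manipulation above is either an identity or a Cauchy-Schwarz that does not bring in these parameters.
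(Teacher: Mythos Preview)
Your Cea-type skeleton and the handling of $l(\cdot,\cdot)$ via Cauchy--Schwarz on a PSD form are both fine, and they match the paper. The gap is exactly where you flag it: the ``residual volume contribution'' $\sum_{K_j}\int_{K_j}\eta_i\,\mathbf{v}_i\!\cdot\!\nabla u_i$ cannot be collapsed into edge quantities by algebra alone. For a generic $u\in (V_h)^m$ this term genuinely sees the interior gradient of $u$, which neither $\|u\|_V$ nor $\|\eta\|_W$ controls. So the bound $|a(u,\eta)|\le C\|u\|_V\|\eta\|_W$ that you aim for is simply false on $(V_h)^m$, and your plan of bounding $a$ and $l$ separately cannot close.

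What you are missing is that the continuity estimate holds only on $V_\text{snap}$, and the proof must use that snapshot functions solve the local Boltzmann equation~\eqref{snapshot1_weak} inside each coarse block. Concretely, after integrating by parts you invoke~\eqref{snapshot1_weak} on $u$ to replace $\sum_i\alpha_i\int_{K_j}\eta_i\,\mathbf{v}_i\!\cdot\!\nabla u_i$ by $-l(u,\eta)$ restricted to $K_j$ plus a $\partial K_j$ term. Summing over $K_j$, the volume piece becomes exactly $-l(u,\eta)$, which cancels the $+l(u,\eta)$ in the combined form. Thus $a(u,\eta)+l(u,\eta)$ reduces to pure coarse-edge integrals (jumps of $u_i$ against one-sided traces of $\eta_i$), and a single Cauchy--Schwarz gives the bound by $\|u\|_V\|\eta\|_{\widetilde W}$. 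The paper also uses~\eqref{snapshot1_weak} a second time, now on $\eta$, to identify the trace sum $\sum_i\alpha_i\big(\sum_{e\in\mathcal{E}_H^0}\int_e|\mathbf{v}_i\cdot\mathbf{n}|\,(\eta_i^-)^2+\sum_{e\in\Gamma^-}\int_e|\mathbf{v}_i\cdot\mathbf{n}|\,\eta_i^2\big)$ exactly with $\|\eta\|_{\widetilde W}^2$; this is what makes the constant equal to $\sqrt{2}$ and manifestly independent of $\epsilon$ and $a^\delta$. The moral: do not split $a$ and $l$; treat $a+l$ jointly and let the PDE kill the volume term.
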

\begin{proof}
	Using~\eqref{eq:coupled_snap} and~\eqref{eq:coupled_global}, and the fact that $V_H\subset V_\text{snap}$, we have:
	\begin{equation}
	a(u_\text{snap}-u_H,w)+l(u_\text{snap}-u_H,w) = F(w)-F(w) = 0\,,\quad \forall w\in V_\text{snap}\,.
	\end{equation}
	Then for all $w\in V_H$:
	\begin{align*}
	&a(u_\text{snap}-u_H,u_\text{snap}-u_H)+l(u_\text{snap}-u_H,u_\text{snap}-u_H)\\ =&a(u_\text{snap}-u_H,u_\text{snap}-w)+l(u_\text{snap}-u_H,u_\text{snap}-w)\,.
	\end{align*}
	Using Proposition \ref{a_and_V_proposition}, we have:
	\begin{align}\label{eqn:V_tilde}
	\|u_\text{snap}-u_H\|^2_{\widetilde{V}} =a(u_\text{snap}-u_H,u_\text{snap}-w)+l(u_\text{snap}-u_H,u_\text{snap}-w)\,.
	\end{align}
	To obtain~\eqref{eqn:best_approx}, noticing that $u_\text{snap}-u_H$ and $u_\text{snap}-w$ are both in $V_\text{snap}$, it amounts to show that:
	\begin{equation}\label{eqn:boundedness_tilde_V}
	a(u,w)+l(u,w)\leq C \left\| u\right\| _{\widetilde{V}}\left\| w\right\| _{\widetilde{W}}\quad\forall u,w \in V_{\text{snap}}\,.
	\end{equation}
	In fact it suffices to show that
	\begin{equation}\label{boundedness}
	a(u,w)+l(u,w)\leq \sqrt{2} \left\| u\right\| _{V}\left\| w\right\| _{\widetilde{W}}\quad\forall u,w \in V_{\text{snap}}\,,
	\end{equation}
	since it is obvious that $\|u\|_{V}\leq\|u\|_{\widetilde{V}}$.
	
	To show~\eqref{boundedness}, we first use integration by parts to obtain
	\begin{align*}
	\sum_{i=1}^m\alpha_i\left( -\int_\Omega u_i\nabla w_i\cdot\mathbf{v}_i\right) =&\sum_{i=1}^m\alpha_i\left( \int_\Omega w_i\nabla u_i\cdot\mathbf{v}_i - \sum_{\tau\in \mathcal{T}^h}\int_{\partial \tau}\mathbf{v}_i\cdot\mathbf{n}u_iw_i\right) \\
	=&\sum_{i=1}^m\alpha_i\left( \sum_{K_j}\int_{K_j} \nabla u_i\cdot\mathbf{v}_i  w_i- \sum_{K_j}\int_{\partial K_j}\mathbf{v}_i\cdot\mathbf{n}u_iw_i\right) \\
	=&-l(u,w)-\sum_{i=1}^m\alpha_i \sum_{K_j}\int_{\partial K_j}\mathbf{v}_i\cdot\mathbf{n}u_iw_i\,,
	\end{align*}
	where we have used the continuity accross fine scales $\partial\tau$, and the assumption that, in each $K_j$, $u$ satisfies the following equation
	\begin{align}\label{snapshot1_weak}
	-\int_{K_j} u_i\nabla w_i\cdot\mathbf{v}_i + \int_{\partial K_j} u_i w_i \mathbf{v}_i \cdot\mathbf{n}
	+\int_{K_j} \epsilon u_i w_i
	+\int_{K_j}\frac{1}{\epsilon a^\delta}\left( u_i-\sum_{q=1}^m \alpha_qu_q \right) w_i =0\,,   
	\end{align}
	for all $i=1,2,\cdots, m$. Here $w$ could be any function in $(V_h)^m$ restricted on $K_j$. In particular, \eqref{snapshot1_weak} works for $w \in V_{\text{snap}}^j$.
		
	Using the definition of $a(\cdot,\cdot)$ and direct calculations, we have
	\begin{align*}
	a(u,w)+l(u,w)=&\sum_{i=1}^m\alpha_i\left( -\sum_{K_j}\int_{\partial K_j}\mathbf{v}_i\cdot\mathbf{n}u_iw_i+\sum_{e\in\mathcal{E}_H^0}\int_e u_i^+[w_i]\cdot\mathbf{v}_i+\sum_{e\in{\Gamma^+}}\int_e u_iw_i\mathbf{v}_i\cdot\mathbf{n} \right) \\
	&+\sum_{i=1}^m\alpha_i\sum_{K_j}\left( \int_{K_j} \epsilon u_i w_i
	+\frac{1}{\epsilon a^\delta}\left( u_i-\sum_{q=1}^m \alpha_qu_q \right) w_i\right) \\
	=&\sum_{i=1}^m\alpha_i\left( -\sum_{e\in\mathcal{E}_H^0}\int_e w_i^-[u_i]\cdot\mathbf{v}_i-\sum_{e\in{\Gamma^-}}\int_e u_iw_i\mathbf{v}_i\cdot\mathbf{n}\right) \,.
	\end{align*}
	Then, applying the Cauchy-Schwarz inequality, we have 
	\begin{align}\label{eqn:conv_CS}
	a(u,w)+l(u,w)&\leq\left (\sum_{i=1}^m\alpha_i\left( \sum_{e\in\mathcal{E}_H^0}\int_e \left | \mathbf{v}_i\cdot\mathbf{n} \right |[u_i]^2+\sum_{e\in{\Gamma^-}}\int_e \left | \mathbf{v}_i\cdot\mathbf{n} \right |u_i^{2}  \right)   \right )^{1/2}\nonumber\\
	&\quad\left (\sum_{i=1}^m\alpha_i\left( \sum_{e\in\mathcal{E}_H^0}\int_e \left | \mathbf{v}_i\cdot\mathbf{n} \right |{w_i^{-}}^2+\sum_{e\in{\Gamma^-}}\int_e \left | \mathbf{v}_i\cdot\mathbf{n} \right |w_i^{2}\right)  \right )^{1/2}.
	\end{align}
	The two terms on the right hand side are taken care of separately. To handle the first term, recall the definition of $V$-norm in equation \eqref{def:V_norm}, we have
	\begin{align}\label{eqn:conv_term1}
	\sum_{i=1}^m\alpha_i\left( \sum_{e\in\mathcal{E}_H^0}\int_e \left | \mathbf{v}_i\cdot\mathbf{n} \right |[u_i]^2+\sum_{e\in{\Gamma^-}}\int_e \left | \mathbf{v}_i\cdot\mathbf{n} \right |u_i^{2}  \right) \leq \sum_{i=1}^m\alpha_i \sum_{e\in \mathcal{E}_H}\int_e\left | \mathbf{v}_i\cdot\mathbf{n} \right |[u_i]^2=\sqrt{2}\left \| u \right \|_{V}^2\,.
	\end{align}
	
	And to compute the second term, we notice that
	\begin{align*}
	0=&\sum_{i=1}^m\alpha_i\sum_{K_j}\left( -\int_{K_j} w_i\nabla w_i\cdot\mathbf{v}_i + \int_{\partial K_j} w_i^2 \mathbf{v}_i \cdot\mathbf{n}\right) +l(w,w)\\
	=&\sum_{i=1}^m\alpha_i\sum_{K_j}\left( -\frac{1}{2}\int_{\partial K_j} w_i^2 \mathbf{v}_i \cdot\mathbf{n}+ \int_{\partial K_j} w_i^2 \mathbf{v}_i \cdot\mathbf{n}\right) +l(w,w)\\
	=&\frac{1}{2}\sum_{i=1}^m\alpha_i\sum_{K_j}\int_{\partial K_j}\mathbf{v}_i\cdot\mathbf{n}w_i^2 +l(w,w)\\
	=&\frac{1}{2}\sum_{i=1}^m\alpha_i\left ( 
	-\sum_{e\in \Gamma^-}\int_{e}\left |\mathbf{v}_i\cdot\mathbf{n}  \right |w_i^2 + 
	\sum_{e\in \Gamma^+}\int_{e}\left |\mathbf{v}_i\cdot\mathbf{n}  \right |w_i^2+ \sum_{e\in \mathcal{E}_H^0}\int_{e}\left |\mathbf{v}_i\cdot\mathbf{n}  \right |\left ({w_i^{+}}^2-{w_i^{-}}^2  \right ) \right )+l(w,w)\,,
	\end{align*}
	which in turn gives
	\begin{align}\label{eqn:conv_term2}
	&\sum_{i=1}^m\alpha_i\left( \sum_{e\in\mathcal{E}_H^0}\int_e \left | \mathbf{v}_i\cdot\mathbf{n} \right |{w_i^{-}}^2+\sum_{e\in{\Gamma^-}}\int_e \left | \mathbf{v}_i\cdot\mathbf{n} \right |w_i^{2} \right)  \nonumber\\
	=&\frac{1}{2}\sum_{i=1}^m\alpha_i\left ( 
	\sum_{e\in \Gamma^-}\int_{e}\left |\mathbf{v}_i\cdot\mathbf{n}  \right |w_i^2 + 
	\sum_{e\in \Gamma^+}\int_{e}\left |\mathbf{v}_i\cdot\mathbf{n}  \right |w_i^2+ \sum_{e\in \mathcal{E}_H^0}\int_{e}\left |\mathbf{v}_i\cdot\mathbf{n}  \right |\left (w_i^{+2}+w_i^{-2}  \right ) \right )+l(w,w)\nonumber\\
	=&\frac{1}{2}\sum_{i=1}^m\alpha_i\sum_{K_j}\int_{\partial K_j}\left| \mathbf{v}_i\cdot\mathbf{n}\right| w_i^2+l(w,w)\nonumber\\
	=&\left\| w\right\| _{\widetilde{W}}^2\,.
	\end{align}
	Plug~\eqref{eqn:conv_term1} and~\eqref{eqn:conv_term2} into~\eqref{eqn:conv_CS}, we have proved the desired boundedness condition \eqref{boundedness} which concludes the proof of~\eqref{eqn:best_approx}.
\end{proof}

Now, we are ready to prove our main convergence result in this section.

\begin{theorem}\label{final}
	Let $u_\text{snap}$ be the snapshot solution to problem (\ref{eq:coupled_snap}) and let $u_H$ be the multiscale solution to problem (\ref{eq:coupled_global}). Then:
	$$ \left \| u_\text{snap}-u_H \right \|_{\widetilde{V}}^2\leq \frac{CM}{\Lambda_*}\left\|u_\text{snap} \right\|_\text{Energy}^2,$$		
	where
	$\Lambda_*=\min_j \lambda_{j,L_j+1}$, $C$ is the same constant from Lemma \ref{lemma1}, and $M$ is the same constant from Proposition \ref{norm_connection}.
\end{theorem}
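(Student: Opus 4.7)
The plan is to combine Lemma \ref{lemma1}, the spectral decomposition behind the local GEP \eqref{eqn:GEP}, and the two bounds in Proposition \ref{norm_connection}. By Lemma \ref{lemma1} it suffices to exhibit one concrete interpolant $w^\star\in V_H$ for which
$$
\|u_\text{snap}-w^\star\|_{\widetilde{W}}^2\;\le\;\frac{M}{\Lambda_*}\,\|u_\text{snap}\|_{\text{Energy}}^2,
$$
since Lemma \ref{lemma1} will then add the constant $C$ and yield the claim.

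To construct $w^\star$ I would work coarse-element by coarse-element. On each $K_j$, the forms $a^j(\cdot,\cdot)$ and $s^j(\cdot,\cdot)$ defined in \eqref{spectral} are symmetric and non-negative, and $s^j$ is positive-definite on $V_\text{snap}^j$ (the $\int_{K_j^+}\epsilon\widetilde\phi_i^2$ piece breaks the degeneracy of the collision matrix $(a_{il})$ identified in Proposition \ref{prop:semi-definite}). So the GEP \eqref{eqn:GEP} admits a complete $s^j$-orthonormal eigenbasis $\{\phi_k^j\}$ of $V_\text{snap}^j$ that is simultaneously $a^j$-orthogonal with $a^j(\phi_k^j,\phi_l^j)=\lambda_{j,k}\delta_{kl}$. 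Expand
$$
u_\text{snap}\big|_{K_j}=\sum_{k}c_{j,k}\,\phi_k^j,\qquad w^\star\big|_{K_j}:=\sum_{k=1}^{L_j}c_{j,k}\,\phi_k^j\in V_H^j,
$$
so the residual is $(u_\text{snap}-w^\star)|_{K_j}=\sum_{k>L_j}c_{j,k}\phi_k^j$. Using $s^j$-orthonormality and $a^j$-orthogonality,
$$
s^j\!\left((u_\text{snap}-w^\star)|_{K_j},(u_\text{snap}-w^\star)|_{K_j}\right)
=\sum_{k>L_j}c_{j,k}^2\;\le\;\frac{1}{\lambda_{j,L_j+1}}\sum_{k>L_j}\lambda_{j,k}\,c_{j,k}^2
\;\le\;\frac{1}{\Lambda_*}\,a^j\!\left(u_\text{snap}|_{K_j},u_\text{snap}|_{K_j}\right),
$$
where in the last step I use both the definition of $\Lambda_*=\min_j\lambda_{j,L_j+1}$ and the Pythagorean bound $\sum_{k>L_j}\lambda_{j,k}c_{j,k}^2\le a^j(u_\text{snap}|_{K_j},u_\text{snap}|_{K_j})$.

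Summing over $j$ and chaining the two parts of Proposition \ref{norm_connection} gives
$$
\|u_\text{snap}-w^\star\|_{\widetilde{W}}^2\le\sum_j s^j\!\left((u_\text{snap}-w^\star)|_{K_j},(u_\text{snap}-w^\star)|_{K_j}\right)\le\frac{1}{\Lambda_*}\sum_j a^j(u_\text{snap}|_{K_j},u_\text{snap}|_{K_j})\le\frac{M}{\Lambda_*}\|u_\text{snap}\|_{\text{Energy}}^2,
$$
and inserting this into Lemma \ref{lemma1} yields the theorem. The main delicate point is not the chain above but justifying the simultaneous $a^j/s^j$ diagonalization on the \emph{snapshot} space while $a^j$ and $s^j$ are defined through the energy-minimizing extensions on $K_j^+$: one has to verify that $s^j$ is genuinely an inner product on $V_\text{snap}^j$ (using $\epsilon>0$ to rule out the isotropic kernel of the collision matrix) and that the extension operator $\psi\mapsto\widetilde\psi$ is linear so the identities $a^j(\phi_k^j,\phi_l^j)=\lambda_{j,k}\delta_{kl}$ transfer back from $V_\text{snap}^{j,+}$ to $V_\text{snap}^j$. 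If $\epsilon=0$ is allowed, a modest adjustment (for instance restricting to the non-isotropic orthogonal complement, on which the collision term is coercive) is needed, but otherwise the computation is routine.
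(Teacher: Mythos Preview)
Your proposal is correct and follows essentially the same route as the paper: construct the interpolant $w^\star$ as the local spectral projection onto $\{\phi_k^j:k\le L_j\}$, use the eigenvalue ordering to bound each $s^j$-residual by $\frac{1}{\lambda_{j,L_j+1}}a^j(u_\text{snap}|_{K_j},u_\text{snap}|_{K_j})$, and then sandwich with the two inequalities of Proposition~\ref{norm_connection} before invoking Lemma~\ref{lemma1}. Your additional remarks on the positive-definiteness of $s^j$ (via the $\epsilon$-term) and the linearity of the extension $\psi\mapsto\widetilde\psi$ are useful clarifications that the paper leaves implicit.
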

\begin{proof}
	We first denote
	\[
	u_\text{snap}=\sum_{j}u_\text{snap}|_{K_j}=\sum_{j}u_\text{snap}^j=\sum_{j,l}d_{j,l}\phi_{l}^j\,,
	\]
	where $\phi_l^j$ is the $l$-th multiscale basis function for the coarse element $K_j$~\eqref{eqn:GEP}. Note that $\text{span}\{\phi_l^j\}$ covers the entire snapshot space. We then define a projection of $u_\text{snap}^j$ into $V_H^j$, as well as a projection of $u_\text{snap}$ into $V_H$:
	\[
	P^j(u_\text{snap}^j)=\sum_{l\leq L_j}d_{j,l}\phi_{l}^j\,,\quad P(u_\text{snap})=\sum_j\sum_{l\leq L_j}d_{j,l}\phi_{l}^j.
	\]
	It is easy to see that $P^j(u_\text{snap}^j)=P(u_\text{snap})|_{K_j}$.
	Combining with Proposition \ref{norm_connection}, we have
	\begin{align*}
	\inf_{w\in V_H}\left \| u_\text{snap}-w \right \|_{\widetilde{W}}^2&\leq\left \| u_\text{snap}-P(u_\text{snap}) \right \|_{\widetilde{W}}^2\\
	&\leq\sum_{j}s^j\left( u_{\text{snap}}^j-P^j(u_{\text{snap}}^j),u_{\text{snap}}^j-P^j(u_{\text{snap}}^j)\right) \\
	&\leq\sum_{j}\frac{1}{\lambda_{L_j+1}^{(j)}}a^j\left( u_{\text{snap},j},u_{\text{snap},j}\right)\\
	&\leq\frac{1}{\Lambda_*}\sum_{j}a^j\left( u_{\text{snap}}^j,u_{\text{snap}}^j\right)\\
	&\leq\frac{M}{\Lambda_*}\left\|u_\text{snap} \right\|_\text{Energy}^2. 
	\end{align*}
	Combining with Lemma \ref{lemma1}, we proved the theorem.
\end{proof}

In the above theorem, we estimate the error between the snapshot solution $u_{\text{snap}}$ and the multiscale solution $u_H$.
We see that the error is inversely proportional to the eigenvalues. This shows that the multiscale space gives a good approximation property 
in the snapshot space. In our analysis, we assume that the snapshot functions satisfy the PDE in the strong sense, that is, (\ref{snapshot1_weak}).
On the other hand, there is an error between the snapshot solution $u_{\text{snap}}$ and the fine scale solution $u_h$ if we use Algorithm \textproc{RanLocal} in Section \ref{sec:basis}.
This amounts to an irreducible error, and the analysis of this is beyond the scope of this paper.

We should emphasize that the difficulty brought by small $\delta$ is encoded in the quality of $\Lambda_\ast$ and thus is not explicitly expressed in the error analysis.

\subsection{Small $\epsilon$ regime}\label{sec:small}
An important property the algorithm satisfies is that it is robust with respect to the parameters. In the limiting regime of $\epsilon\to0$, $\Lambda_*$ has a positive lower bound, and this serves as the stability argument that allows the algorithm to be effective across regimes. In particular we will show:

\begin{theorem}\label{thm:expansion}
	Denote $\lambda_{j,k}$ the $k$-th eigenvalue of the GEP defined in~\eqref{eqn:GEP} for coarse element $K_j$. It has a asymptotic limit in the zero limit of $\epsilon$, meaning there is a constant $\lambda^0_{j,k}$ so that:
	\[
	\left| \lambda_{j,k}-\lambda^0_{j,k}\right| =\mathcal{O}(\epsilon)\,.
	\]	
\end{theorem}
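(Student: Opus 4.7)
The approach is to treat the GEP as a singular perturbation problem in $\epsilon$ by exposing the common $1/\epsilon$ scattering term that appears in both $a^j$ and $s^j$. Splitting the bilinear forms as
\begin{equation*}
a^j(\phi,\eta) = \bar a^j(\phi,\eta) + \tfrac{1}{\epsilon} T^j(\phi,\eta),\qquad s^j(\phi,\eta) = \bar s^j(\phi,\eta) + \epsilon E^j(\phi,\eta) + \tfrac{1}{\epsilon} T^j(\phi,\eta),
\end{equation*}
where $T^j(\phi,\eta) = \int_{K_j^+} \tfrac{1}{a^\delta} \sum_{i,l} a_{il} \widetilde\phi_l \widetilde\eta_i$ is positive semi-definite with kernel equal to the isotropic subspace by Proposition~\ref{prop:semi-definite}, the matrices obtained from multiplying the GEP through by $\epsilon$ become $\epsilon A^j(\epsilon) = T^j + \epsilon \bar A^j + O(\epsilon^2)$ and $\epsilon S^j(\epsilon) = T^j + \epsilon \bar S^j + O(\epsilon^2)$, sharing the same leading matrix $T^j$. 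The plan is then to quotient out this degeneracy.

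A preliminary step is to show the energy-minimizing extensions themselves admit a regular expansion $\widetilde\phi^\epsilon = \widetilde\phi^0 + O(\epsilon)$. Because $\widetilde\phi^\epsilon$ minimizes $a^j_{\text{Energy}}$, which also contains a $1/\epsilon$ scattering penalty, the limit $\widetilde\phi^0$ is naturally characterized as the minimizer of the elliptic part of the energy over those elements of $V^{j,+}_{\text{snap}}$ that equal $\psi$ on $K_j$ and are isotropic on $K_j^+ \setminus K_j$. The $O(\epsilon)$ rate then follows from a standard asymptotic analysis for penalized quadratic minimization, and this regularity is what legitimizes viewing $A^j(\epsilon)$ and $S^j(\epsilon)$ as analytic functions of $\epsilon$ with the above expansions.

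The next step is to decompose $V^j_{\text{snap}} = V_{\text{iso}}^j \oplus V_{\text{aniso}}^j$, which renders $T^j$ block-diagonal, vanishing on $V_{\text{iso}}^j$ and positive definite on $V_{\text{aniso}}^j$. Writing an eigenvector as $c=(c_i,c_a)$ and applying a Schur complement to the $\epsilon$-scaled system, the anisotropic block yields $(1-\lambda)T^j_{aa}c_a = O(\epsilon)$ while the isotropic block, after dividing through by $\epsilon$, becomes $\bar a^j_{ii} c_i = \lambda \bar s^j_{ii} c_i + O(\epsilon)$. This bifurcates the eigenvalues into two families: those clustering at $\lambda = 1$, associated with vectors where the anisotropic block is active, and those bounded away from $1$, for which $c_a = O(\epsilon)$ and the effective dynamics reduce to an elliptic-type reduced GEP on $V_{\text{iso}}^j$ with $\epsilon$-independent leading coefficients. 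The Courant–Fischer min-max principle applied to the $O(\epsilon)$-perturbed symmetric pencil then delivers $|\lambda_{j,k} - \lambda^0_{j,k}| = O(\epsilon)$, with the limit $\lambda^0_{j,k}$ being either an eigenvalue of the reduced isotropic GEP or the value $1$.

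The hardest part will be the preliminary regularity claim for $\widetilde\phi^\epsilon$, since the underlying minimization is itself singular. One must show that the boundary transition across $\partial K_j$, where the prescribed values of $\psi$ must be reconciled with the asymptotic isotropy constraint in $K_j^+\setminus K_j$, does not create an internal layer that spoils the $O(\epsilon)$ rate. A secondary technical point is the near-resonance analysis at $\lambda = 1$: the clean min-max argument applies to the eigenvalues bounded away from $1$, while those accumulating at $1$ require a dedicated balance using the factor $(1-\lambda)T^j_{aa}$ in the anisotropic row of the Schur decomposition to extract the $O(\epsilon)$ rate even without a spectral gap.
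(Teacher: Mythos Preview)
Your proposal treats the GEP as genuinely singular in $\epsilon$ and attempts to resolve the singularity through an isotropic/anisotropic splitting of $V^j_{\text{snap}}$ followed by a Schur complement. The paper's route is both simpler and structurally different: it first shows (Lemma~\ref{lemma2}) that the matrices $A^j$ and $S^j$ are in fact \emph{regular} in $\epsilon$, i.e.\ $A^j=A^{j,0}+\mathcal O(\epsilon)$ and $S^j=S^{j,0}+\mathcal O(\epsilon)$ with explicit $\epsilon$-independent limits~\eqref{eqn:A^0}--\eqref{eqn:S^0}, and then applies a Weyl-type perturbation bound for generalized eigenvalue problems directly.

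The gap in your argument is the reason why the $\tfrac1\epsilon$ scattering block does \emph{not} make $A^j$ and $S^j$ singular. The matrix entries are evaluated on the snapshot basis $\psi_p^j$, and snapshot functions solve the local Boltzmann equation~\eqref{snapshot1_weak}; the leading term $\widetilde\psi_p^{j,0}$ in the expansion of each extended basis function is therefore isotropic on \emph{all of} $K_j^+$, not only on the extension zone $K_j^+\setminus K_j$ as you write. (The constraint $\widetilde\psi=\psi$ on $K_j$ does not obstruct isotropy there, because $\psi$ itself is a local PDE solution and hence isotropic at leading order.) By Proposition~\ref{prop:semi-definite} the scattering form then annihilates $\widetilde\psi_p^{j,0}$ against anything, so $\tfrac1\epsilon T^j(\widetilde\psi_p^j,\widetilde\psi_q^j)=\mathcal O(\epsilon)$, not $\mathcal O(1/\epsilon)$. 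Consequently your decomposition $\epsilon A^j=T^j+\epsilon\bar A^j+\mathcal O(\epsilon^2)$ is misleading: in this basis the ``leading'' block $T^j$ is itself $\mathcal O(\epsilon^2)$, there is no anisotropic subspace of $V^j_{\text{snap}}$ on which $T^j$ stays uniformly positive, and the predicted cluster of eigenvalues at $\lambda=1$ does not arise. Once the intrinsic near-isotropy of $V^j_{\text{snap}}$ is recognized, the Schur-complement machinery and the delicate near-resonance analysis are unnecessary; the result follows from standard matrix perturbation of a regular pencil.
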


This theorem, when combined with our main Theorem~\ref{final}, indicates that the error bound, which is controlled by $\Lambda_\ast=\frac{1}{\text{min}_j\{\lambda_{j,L_{j+1}}\}}$, will not grow in $\epsilon$ and thus the error is uniformly bounded.

To show the theorem, we first start with a lemma.

\begin{lemma}\label{lemma2}
	For every coarse element $K_j$, we have
	\[
	A^{j} = A^{j,0} + \mathcal{O}(\epsilon)\,,\quad \text{and}\quad S^{j} = S^{j,0} + \mathcal{O}(\epsilon)\,,
	\]
	where entries in $A^{j,0}$ and $S^{j,0}$ are defined by: 
	\begin{equation}\label{eqn:A^0}
	A_{pq}^{j,0}=\sum_{i=1}^m\alpha_i\left( \int_{K_j^+}\nabla\widetilde{\psi}_{p,i}^{j,0}\cdot\nabla\widetilde{\psi}_{q,i}^{j,0}+\frac{1}{H}\sum_{e\in \mathcal{E}_H^0(K_j^+)}\int_e[\widetilde{\psi}_{p,i}^{j,0}][\widetilde{\psi}_{q,i}^{j,0}]\right)\,,
	\end{equation}
	and
	\begin{equation}\label{eqn:S^0}
	S^{j,0}_{pq} = \sum_{i=1}^m\alpha_i\left( \frac{1}{2}\sum_{K\subset K_j^+}\int_{\partial K}\left | \mathbf{v}_i\cdot\mathbf{n} \right |\widetilde{\psi}_{p,i}^{j,0}\widetilde{\psi}_{q,i}^{j,0}\right)\,,
	\end{equation}
	where $V_\text{snap}^j=\text{span}\{\psi_p^j\}$, and $\widetilde{\psi}_{p}^{j}$ is the basis functions' energy minimizing extension. We further denote $\widetilde{\psi}_{p}^{j,0}$ the leading order asymptotic expansion of $\widetilde{\psi}_{p}^{j}$.
\end{lemma}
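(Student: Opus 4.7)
The plan is to build an asymptotic expansion of the energy-minimizing extensions $\widetilde{\psi}_p^j$ in powers of $\epsilon$ and then propagate this expansion through the bilinear forms $a^j$ and $s^j$ that define the matrices in~\eqref{eqn:A_S_matrix}. First I would perform a Hilbert-type expansion for each snapshot basis function $\psi_p^j\in V_{\text{snap}}^j$. Writing $\psi_p^j=\psi_p^{j,0}+\epsilon\,\psi_p^{j,1}+\epsilon^2\psi_p^{j,2}+\cdots$ and inserting into the local snapshot system~\eqref{snapshot2}, the $O(\epsilon^{-1})$ balance forces $\mathcal{R}\psi_p^{j,0}=0$, i.e.\ $\psi_p^{j,0}$ is isotropic in the velocity index; higher-order balances then recursively determine $\psi_p^{j,1},\psi_p^{j,2},\dots$ This is the standard diffusion-limit Hilbert expansion for the linear Boltzmann equation. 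Since $\widetilde{\psi}_p^{j}$ is obtained by minimizing $a_{\text{Energy}}^{j}$ over the linear span $V_{\text{snap}}^{j,+}$ subject to the Dirichlet-type constraint $\widetilde{\psi}_p^{j}|_{K_j}=\psi_p^{j}|_{K_j}$, and every generator of $V_{\text{snap}}^{j,+}$ admits such an expansion, the minimizer inherits a corresponding expansion $\widetilde{\psi}_p^{j}=\widetilde{\psi}_p^{j,0}+\epsilon\,\widetilde{\psi}_p^{j,1}+O(\epsilon^2)$ whose leading component $\widetilde{\psi}_p^{j,0}$ is again isotropic.

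With this expansion in hand, I would substitute into the definitions~\eqref{spectral} and examine each piece. The gradient terms $\int_{K_j^+}\nabla\widetilde{\psi}_{p,i}^{j}\cdot\nabla\widetilde{\psi}_{q,i}^{j}$, the jump terms $\tfrac{1}{H}\sum_e\int_e[\widetilde{\psi}_{p,i}^{j}][\widetilde{\psi}_{q,i}^{j}]$, and the boundary flux term of $s^j$ immediately reproduce the proposed leading parts~\eqref{eqn:A^0}--\eqref{eqn:S^0} with an $O(\epsilon)$ remainder by continuity of the expansion in $\epsilon$. The $\int_{K_j^+}\epsilon\,\widetilde{\phi}_i\widetilde{\eta}_i$ contribution in $s^j$ is manifestly $O(\epsilon)$. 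The crucial piece is the collision contribution $\frac{1}{\epsilon}\int_{K_j^+}\frac{1}{a^\delta}\sum_{i,l}a_{il}\widetilde{\psi}_{p,l}^{j}\widetilde{\psi}_{q,i}^{j}$. Expanding the product to second order in $\epsilon$ and invoking the third bullet of Proposition~\ref{prop:semi-definite}, every term in which at least one factor is isotropic vanishes. Because both $\widetilde{\psi}_p^{j,0}$ and $\widetilde{\psi}_q^{j,0}$ are isotropic, the $O(1)$ and $O(\epsilon)$ terms in the bilinear product cancel identically, leaving a remainder of size $\epsilon^2$ before the $1/\epsilon$ prefactor; after multiplication this contribution is $O(\epsilon)$. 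Collecting all pieces yields the claimed matrix-level expansions.

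The main technical obstacle is the clean justification that the energy-minimizing extensions $\widetilde{\psi}_p^{j}$ themselves depend smoothly on $\epsilon$ all the way down to $\epsilon=0$, with a well-defined and isotropic leading term. Unlike the raw snapshot functions, $\widetilde{\psi}_p^{j}$ is defined variationally rather than as a strong solution of the Boltzmann system, and the energy $a_{\text{Energy}}^{j}$ contains the singular factor $\tfrac{1}{\epsilon a^\delta}$. The standard way around this is to observe, via Proposition~\ref{prop:semi-definite}, that the collision matrix $\mathsf{A}$ is coercive on the orthogonal complement of the isotropic subspace, so the singular energy penalizes non-isotropic components and forces the minimizer to lie, at leading order, in the isotropic subspace. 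A perturbation argument on the associated saddle-point Karush--Kuhn--Tucker system then upgrades this observation into the ordered expansion $\widetilde{\psi}_p^{j}=\widetilde{\psi}_p^{j,0}+\epsilon\,\widetilde{\psi}_p^{j,1}+O(\epsilon^2)$ used above. Once this step is carried out, the matrix expansion stated in the lemma follows by the algebraic cancellations already described, and Theorem~\ref{thm:expansion} is then obtained as a routine consequence via standard matrix-perturbation theory applied to~\eqref{eqn:GEP}.
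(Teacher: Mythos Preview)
Your overall strategy---postulate a Hilbert expansion of $\widetilde{\psi}_p^{j}$, show the leading term is isotropic, then use Proposition~\ref{prop:semi-definite} to kill the $1/\epsilon$ and $O(1)$ collision contributions in $a^j$ and $s^j$---is exactly the mechanism the paper uses, and your treatment of the cancellations in the collision term is identical to the paper's.

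Where you diverge is in how you extract isotropy of $\widetilde{\psi}_p^{j,0}$. You go through the variational characterization: expand the generators of $V_{\text{snap}}^{j,+}$, then argue that the constrained minimizer of the singular energy $a_{\text{Energy}}^j$ inherits an expansion with isotropic leading part, appealing to coercivity of $\mathsf{A}$ on the non-isotropic complement and a KKT perturbation argument. The paper takes a shorter route that sidesteps the variational analysis entirely: since $\widetilde{\psi}_p^{j}\in V_{\text{snap}}^{j,+}$, it is by construction a sum of local snapshot solutions, hence on each $K_s\subset K_j^+$ it satisfies the weak Boltzmann system~\eqref{snapshot1_weak}. Testing that identity against $w=\widetilde{\psi}_p^{j,0}$ and reading off the $O(1/\epsilon)$ balance gives $\sum_{i,l}a_{il}\widetilde{\psi}_{p,l}^{j,0}\widetilde{\psi}_{p,i}^{j,0}=0$ directly, so isotropy follows from Proposition~\ref{prop:semi-definite} without ever touching the minimization problem or its $\epsilon$-dependence. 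Your approach is not wrong, but the observation that elements of $V_{\text{snap}}^{j,+}$ are themselves piecewise local solutions makes what you call the ``main technical obstacle'' disappear: there is no need to analyze the KKT system. Both the paper and you still \emph{assume} the formal expansion~\eqref{expansion} exists; on that point neither argument is more rigorous than the other.
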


\begin{proof}
	To proceed we notice that $\widetilde{\psi}_{p}^j\in V_\text{snap}^{j,+}$ can be written as the sum of some ${\psi}^s$'s, where ${\psi}^s\in V_\text{snap}^s$ and $K_s\subset K_j^+$. Recall the assumption of equation \eqref{snapshot1_weak}. Then in each $K_s$ and for all $i=1,2,...,m$, we have:
	
	\begin{align}
	-\int_{K_j} \widetilde{\psi}_{p,i}^j\nabla w_i\cdot\mathbf{v}_i + \int_{\partial K_j} \widetilde{\psi}_{p,i}^j w_i \mathbf{v}_i \cdot\mathbf{n}
	+\int_{K_j} \epsilon \widetilde{\psi}_{p,i}^j w_i
	+\int_{K_j}\frac{1}{\epsilon a^\delta}\left( \widetilde{\psi}_{p,i}^j-\sum_{l=1}^m \alpha_l\widetilde{\psi}_{p,l}^j \right) w_i =0.
	\end{align}
	Here $w$ could be any function in $(V_h)^m$ restricted on $K_j$.
	
	Therefore we have:
	\begin{align}\label{eqn:snapshot3}
	\sum_{i=1}^m\alpha_i\left( -\int_{K_j} \widetilde{\psi}_{p,i}^j\nabla w_i\cdot\mathbf{v}_i + \int_{\partial K_j} \widetilde{\psi}_{p,i}^j w_i \mathbf{v}_i \cdot\mathbf{n}
	+\int_{K_j} \epsilon \widetilde{\psi}_{p,i}^j w_i
	\right)+\int_{K_j}\frac{1}{\epsilon a^\delta}\sum_{i,l=1}^m a_{il}\widetilde{\psi}_{p,l}^{j,0}w_i=0.
	\end{align}
	
	Take the asymptotic expansion for $\widetilde{\psi}_{p}^j$:
	\begin{equation}\label{expansion}
	\widetilde{\psi}_{p}^j=\widetilde{\psi}_{p}^{j,0}+\epsilon\widetilde{\psi}_{p}^{j,1}+\mathcal{O}(\epsilon^2)\,,
	\end{equation}
	set $w=\widetilde{\psi}_{p}^{j,0}$, and plug them back into~\eqref{eqn:snapshot3}. We have, in the leading order of $\frac{1}{\epsilon}$:
	\[
	\sum_{i,l=1}^m a_{il}\widetilde{\psi}_{p,l}^{j,0}\widetilde{\psi}_{p,i}^{j,0}=0\,,
	\]
	meaning $\widetilde{\psi}_{p}^{j,0}$ is isotropic in each $K_s$ due to Proposition \ref{prop:semi-definite}. Therefore $\widetilde{\psi}_{p}^{j,0}$ is isotropic. The same analysis is applied to $\widetilde{\psi}_{q}^{j,0}$.
	
	Recall the definition of $A^j$ and $a^n$ in~\eqref{spectral}, we have:
	\begin{align*}
	a^j(\psi_{p}^j,\psi_{q}^j)=&\sum_{i=1}^m\alpha_i\left( \int_{K_j^+}\nabla\widetilde{\psi}_{p,i}^{j}\cdot\nabla\widetilde{\psi}_{q,i}^{j}+\frac{1}{H}\sum_{e\in \mathcal{E}_H^0(K_j^+)}\int_e[\widetilde{\psi}_{p,i}^{j}][\widetilde{\psi}_{q,i}^{j}]\right) +\int_{K_j^+}\frac{1}{\epsilon a^\delta}\sum_{i,l=1}^m a_{il}\widetilde{\psi}_{p,l}^{j}\widetilde{\psi}_{q,i}^{j}\\
	=&\frac{1}{\epsilon}\left[\int_{K_j^+}\frac{1}{a^\delta}\sum_{i,l=1}^m a_{il}\widetilde{\psi}_{p,l}^{j,0}\widetilde{\psi}_{q,i}^{j,0}\right]\\
	&+1\left[\sum_{i=1}^m\alpha_i\left( \int_{K_j^+}\nabla\widetilde{\psi}_{p,i}^{j,0}\cdot\nabla\widetilde{\psi}_{q,i}^{j,0}+\frac{1}{H}\sum_{e\in \mathcal{E}_H^0(K_j^+)}\int_e[\widetilde{\psi}_{p,i}^{j,0}][\widetilde{\psi}_{q,i}^{j,0}]\right) \right.\\
	&+\left.\int_{K_j^+}\frac{1}{ a^\delta}\sum_{i,l=1}^m a_{il}\widetilde{\psi}_{p,l}^{j,0}\widetilde{\psi}_{q,i}^{j,1}+\int_{K_j^+}\frac{1}{ a^\delta}\sum_{i,l=1}^m a_{il}\widetilde{\psi}_{p,l}^{j,1}\widetilde{\psi}_{q,i}^{j,0}\right]\\
	&+\mathcal{O}(\epsilon)\,.
	\end{align*}
	
	Due to Proposition~\ref{prop:semi-definite}, we have:
	\begin{equation*}
	\int_{K_j^+}\frac{1}{a^\delta}\sum_{i,l=1}^m a_{il}\widetilde{\psi}_{p,l}^{j,0}\widetilde{\psi}_{q,i}^{j,0}=\int_{K_j^+}\frac{1}{a^\delta}\sum_{i,l=1}^m a_{il}\widetilde{\psi}_{p,l}^{j,0}\widetilde{\psi}_{q,i}^{j,1}=\int_{K_j^+}\frac{1}{a^\delta}\sum_{i,l=1}^m a_{il}\widetilde{\psi}_{p,l}^{j,1}\widetilde{\psi}_{q,i}^{j,0}=0\,,
	\end{equation*}
	and thus
	\begin{align*}
	a^j(\psi_{p}^j,\psi_{q}^j)&=\sum_{i=1}^m\alpha_i\left( \int_{K_j^+}\nabla\widetilde{\psi}_{p,i}^{j,0}\cdot\nabla\widetilde{\psi}_{q,i}^{j,0}+\frac{1}{H}\sum_{e\in \mathcal{E}_H^0(K_j^+)}\int_e[\widetilde{\psi}_{p,i}^{j,0}][\widetilde{\psi}_{q,i}^{j,0}]\right)+\mathcal{O}(\epsilon)\\
	&\to\sum_{i=1}^m\alpha_i\left( \int_{K_j^+}\nabla\widetilde{\psi}_{p,i}^{j,0}\cdot\nabla\widetilde{\psi}_{q,i}^{j,0}+\frac{1}{H}\sum_{e\in \mathcal{E}_H^0(K_j^+)}\int_e[\widetilde{\psi}_{p,i}^{j,0}][\widetilde{\psi}_{q,i}^{j,0}]\right)\,.
	\end{align*}
	
	The proof for $s^j$ is the same and is omitted here.
\end{proof}

Theorem \ref{thm:expansion} is straightforward consequence of the following perturbation theorem:
\begin{proof}[Proof for Theorem~\ref{thm:expansion}]
	According to Lemma~\ref{lemma2}, $A^j$ and $S^j$ have expansions $A^j=A^{j,0}+\mathcal{O}(\epsilon)=:A^{j,0}+\epsilon A^{j,1}$ and $S^j=S^{j,0}+\mathcal{O}(\epsilon)=:S^{j,0}+\epsilon S^{j,1}$. We also define $\mathbf{x}_k^{j,0}$ as the $k$-th generalized eigenvector of the two matrices $A^{j,0}$ and $S^{j,0}$, i.e.
	\begin{equation}\label{GEP_0}
	A^{j,0}\mathbf{x}_k^{j,0}=\lambda_{j,k}^0S^{j,0}\mathbf{x}_k^{j,0}.
	\end{equation}
	Using Absolute Weyl theorem for generalized eigenvalue problems in \cite{nakatsukasa2010absolute}, when $\epsilon$ is small enough such that $\epsilon\left\|S^{j,1} \right\|_2<\lambda_{\min}(S^{j,0}) $, we have:
	\begin{align*}
	\left| \lambda_{j,k}-\lambda^0_{j,k}\right| \leq&\frac{\left\| \epsilon A^{j,1}\right\|_2}{\lambda_{\min}(S^{j,0})}+\frac{\left\| A^{j,0}\right\|_2+\left\| \epsilon A^{j,1}\right\|_2}{\lambda_{\min}(S^{j,0})(\lambda_{\min}(S^{j,0})-\left\| \epsilon S^{j,1}\right\|_2)}\left\| \epsilon S^{j,1}\right\|_2\\
	=&\frac{\epsilon\left\| A^{j,1}\right\|_2}{\lambda_{\min}(S^{j,0})}+\frac{\left\| A^{j,0}\right\|_2+\epsilon\left\| A^{j,1}\right\|_2}{\lambda_{\min}(S^{j,0})(\lambda_{\min}(S^{j,0})-\epsilon\left\| S^{j,1}\right\|_2)}\epsilon\left\| S^{j,1}\right\|_2\\
	=&\frac{\epsilon \mathcal{O}(1)}{\mathcal{O}(1)}+\frac{\mathcal{O}(1)+\epsilon \mathcal{O}(1)}{\mathcal{O}(1)(\mathcal{O}(1)-\epsilon\mathcal{O}(1))}\epsilon\mathcal{O}(1)\\
	=&\mathcal{O}(\epsilon)
	\end{align*}
	where $\left\| \cdot\right\| _2$ is the spectral norm of a matrix.
\end{proof}

According to the formula for $A^{j,0}$ and $S^{j,0}$ in \eqref{eqn:A^0} and~\eqref{eqn:S^0}, the eigenvalues are positive except that the smallest one is 0 with constant as corresponding eigenvector. So $\Lambda_*$ has positive limit in the limiting regime of $\epsilon\to0$.
%
%
%
%
%
%
%

\section{Numerical results}
We take boundary condition $g(x,\mathbf{v})=\cos(2\pi(x_1+x_2))+1$. And we set $m=6$, and use Gaussian quadrature rule to define
$\{(\mathbf{v}_i,\alpha_i), i = 1,. . . ,m\}$. As for $a^\delta$, we give two examples. In the first example, we will choose $a^\delta$ to be based on a high contrast media $\kappa$, shown in Figure \ref{coefficient}(Left), and choose $a^\delta$ to be an oscillatory function for the second example used in \cite{ll16,ehw00,hw99}, shown in Figure \ref{coefficient}(Right), with expression
\begin{equation*}
a^\delta=\frac{2+1.8\sin(10\pi x_1)}{2+1.8\cos(10\pi x_2)}+\frac{2+\sin(10\pi x_2)}{2+1.8\sin(10\pi x_1)}.
\end{equation*}

\begin{figure}[ht]
	\centering
	\begin{minipage}[t]{0.5\textwidth}
		\centering
		\includegraphics[width=3in]{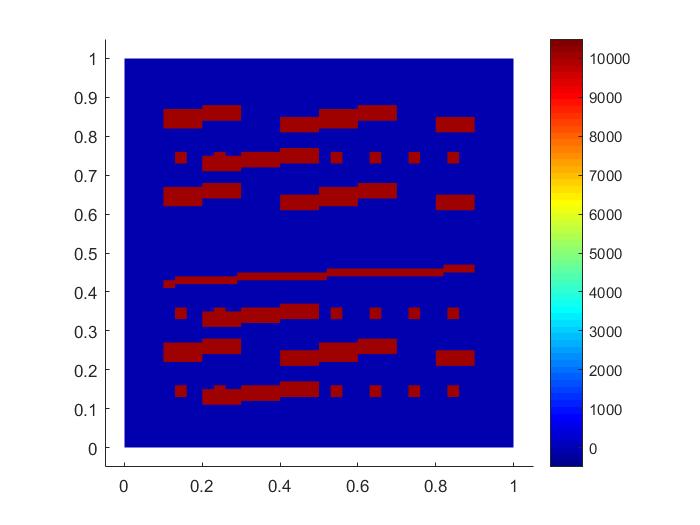}
	\end{minipage}%
	\begin{minipage}[t]{0.5\textwidth}
		\centering
		\includegraphics[width=3in]{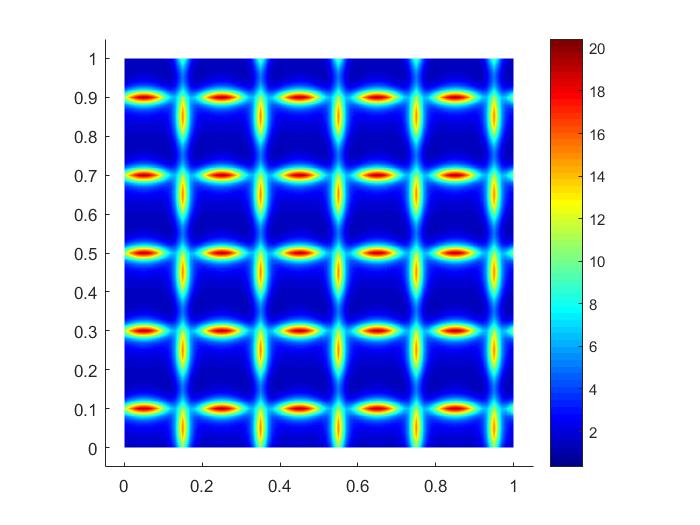}
	\end{minipage}
	\caption{Left: $\kappa$ for Example 1. Right: $a^\delta$ for Example 2.}
	\label{coefficient}
\end{figure}

The space domain $\Omega$ is taken as
the unit square $[0,1]\times[0,1]$ and is divided into $10\times 10$ coarse blocks consisting of uniform squares.
Each coarse element is then divided into $10\times 10$ fine elements consisting of uniform squares. That is, $\Omega$ is partitioned by $100\times 100$ square fine elements. And we use oversampling technique in equation \eqref{def:V_snap_j_oversampling}-\eqref{snapshot2_oversampling} to obtain the snapshot space. We define an oversampling region $K_j^+$ by enlarging $K_j$ by one coarse grid layer.

To compare the accuracy, we will use the following error quantities:
\begin{equation*}
e_1=\left( \frac{\sum_{i=1}^m\alpha_i\int_{\Omega}|u_{h_i}-u_{H,i}|^2}{\sum_{i=1}^m\alpha_i\int_{\Omega}|u_{h_i}|^2}\right) ^{1/2},\quad
e_2=\left( \frac{\int_{\Omega}|\overline{u_{h}}-\overline{u_{H}}|^2}{\int_{\Omega}|\overline{u_{h}}|^2}\right) ^{1/2},
\end{equation*}
where $\overline{u}$ is defined as $\overline{u}=\sum_{i=1}^m\alpha_i u_i.$

For Example 1, we first fix $a^\delta=\kappa^4$ and give the error tables for Knudsen number $\epsilon=10^{-1},10^{-2},10^{-3}$, respectively. And $L$ is the number of multiscale basis chosen from each coarse element, and snapshot ratio is define by
$$\text{snapshot ratio}=\frac{\text{dim}(V_H)}{\text{dim}(V_\text{snap})}.$$

From Table \ref{ex1 error table epsilon-1 kappa4}, \ref{ex1 error table epsilon-2 kappa4}, \ref{ex1 error table epsilon-3 kappa4}, we can see this framework works for all Knudsen number $\epsilon$, which verifies our proved conclusion. In addition, we see
clearly the reduction of error when more basis functions are used, and the reduction of error is more
rapid when fewer basis functions are used. We also observe that the method gives reasonable error
levels with small snapshot ratios. On the other hand, Figures \ref{ex1 solution} show the fine and multiscale
solutions with $\epsilon=10^{-2}$ and $L=5$. From these figures, we observe very good agreements between the fine-scale and multiscale solutions

Next, we fix $\epsilon=10^{-2}$ and change the high contrast value of $a^\delta$. We set $a^\delta=\kappa^2,\;\kappa^4,\;\kappa^6$, respectively. From Table \ref{ex1 error table change high contrast}, we can see that contrast values do not affect the error.

\begin{table}[ht]
	\begin{minipage}[t]{0.5\textwidth}
		\centering
		\begin{tabular}{|c|c|c|c|}
			\hline
			\hline
			$L$  & snapshot ratio & $e_1$ & $e_2$ \\
			\hline
			1  & 0.79\%  &  19.64\% &   9.41\% \\
			\hline
			2 & 1.59\% & 17.68\%  &  8.53\% \\
			\hline
			3  & 2.38\%  &  14.41\% &  7.40\% \\
			\hline
			5 & 3.97\%  & 8.11\%  & 4.92\%  \\
			\hline
			7 & 5.56\%  & 6.16\%  & 3.62\%  \\
			\hline
			10 & 7.94\%  & 3.44\%  &  1.62\% \\
			\hline
			15 &  11.90\% & 2.24\% & 1.04\%  \\
			\hline
			20 &  15.87\% &  1.64\% & 0.68\%  \\	
			\hline
		\end{tabular}
		\caption{Errors for Example 1 with $\epsilon=10^{-1}$ and $a^\delta=\kappa^4$.}
		\label{ex1 error table epsilon-1 kappa4}
	\end{minipage}%
	\begin{minipage}[t]{0.5\textwidth}
		\centering
		\begin{tabular}{|c|c|c|c|}
			\hline
			\hline
			$L$  & snapshot ratio & $e_1$ & $e_2$ \\
			\hline
			1  & 0.79\%  &  12.05\% &   11.69\% \\
			\hline
			2 & 1.59\% & 15.35\%  &  15.17\% \\
			\hline
			3  & 2.38\%  &  3.73\% &  3.44\% \\
			\hline
			5 & 3.97\%  & 2.90\%  & 2.64\%  \\
			\hline
			7 & 5.56\%  & 2.61\%  & 2.41\%  \\
			\hline
			10 & 7.94\%  & 1.86\%  &  1.67\% \\
			\hline
			15 &  11.90\% & 1.20\% & 0.98\%  \\
			\hline
			20 &  15.87\% &  1.04\% & 0.83\%  \\	
			\hline
		\end{tabular}
		\caption{Errors for Example 1 with $\epsilon=10^{-2}$ and $a^\delta=\kappa^4$.}
		\label{ex1 error table epsilon-2 kappa4}
	\end{minipage}
\end{table}

\begin{table}[ht]
	\begin{minipage}[t]{0.5\textwidth}
		\centering
		\begin{tabular}{|c|c|c|c|}
			\hline
			\hline
			$L$  & snapshot ratio & $e_1$ & $e_2$ \\
			\hline
			1  & 0.79\%  &  12.80\% &   12.80\% \\
			\hline
			2 & 1.59\% & 26.43\%  &  26.42\% \\
			\hline
			3  & 2.38\%  &  17.86\% &  17.85\% \\
			\hline
			5 & 3.97\%  & 4.45\%  & 4.43\%  \\
			\hline
			7 & 5.56\%  & 3.60\%  & 3.59\%  \\
			\hline
			10 & 7.94\%  & 3.55\%  &  3.53\% \\
			\hline
			15 &  11.90\% & 3.20\% & 3.18\%  \\
			\hline
			20 &  15.87\% &  3.19\% & 3.17\%  \\	
			\hline
		\end{tabular}
		\caption{Errors for Example 1 with $\epsilon=10^{-3}$ and $a^\delta=\kappa^4$.}
		\label{ex1 error table epsilon-3 kappa4}
	\end{minipage}%
	\begin{minipage}[t]{0.5\textwidth}
		\centering
		\begin{tabular}{|c|c|c|c|}
			\hline
			\hline
			$L$  & $\kappa^2$ & $\kappa^4$ & $\kappa^6$ \\
			\hline
			1  & 11.70\%  &  11.69\% &   11.68\% \\
			\hline
			2 & 15.19\% & 15.17\%  &  15.17\% \\
			\hline
			3  & 3.41\%  &  3.44\% &  3.45\% \\
			\hline
			5 & 2.60\%  & 2.64\%  & 2.64\%  \\
			\hline
			7 & 2.37\%  & 2.41\%  & 2.41\%  \\
			\hline
			10 & 1.64\%  & 1.67\%  &  1.67\% \\
			\hline
			15 &  0.96\% & 0.98\% & 0.98\%  \\
			\hline
			20 &  0.81\% &  0.83\% & 0.83\%  \\	
			\hline
		\end{tabular}
		\caption{$e_2$ for Example 1 with different high contrast value of $a^\delta$.}
		\label{ex1 error table change high contrast}
	\end{minipage}
\end{table}

\begin{figure}[ht]
	\centering
	\begin{minipage}[t]{0.45\textwidth}
		\centering
		\includegraphics[width=3in]{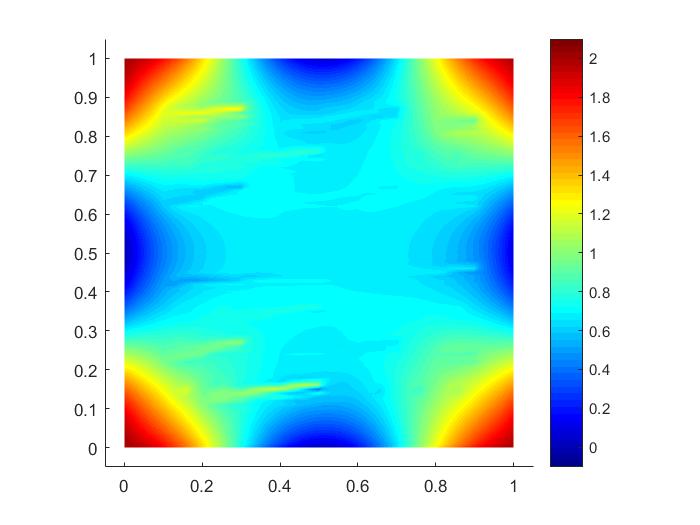}
	\end{minipage}%
	\begin{minipage}[t]{0.45\textwidth}
		\centering
		\includegraphics[width=3in]{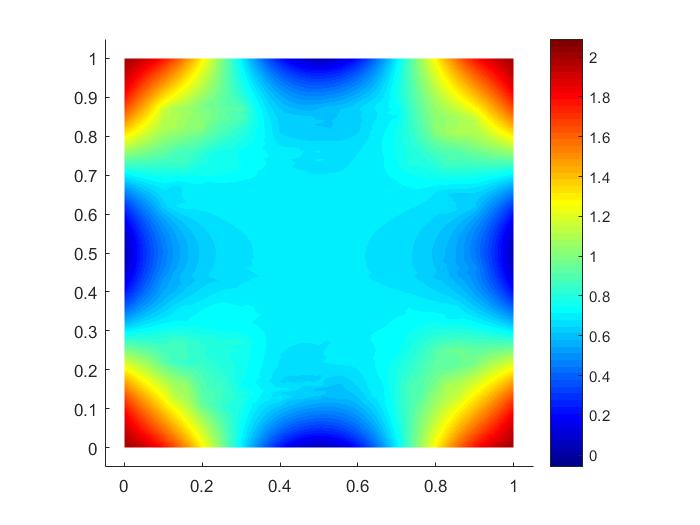}
	\end{minipage}\\%
	\begin{minipage}[t]{0.45\textwidth}
		\centering
		\includegraphics[width=3in]{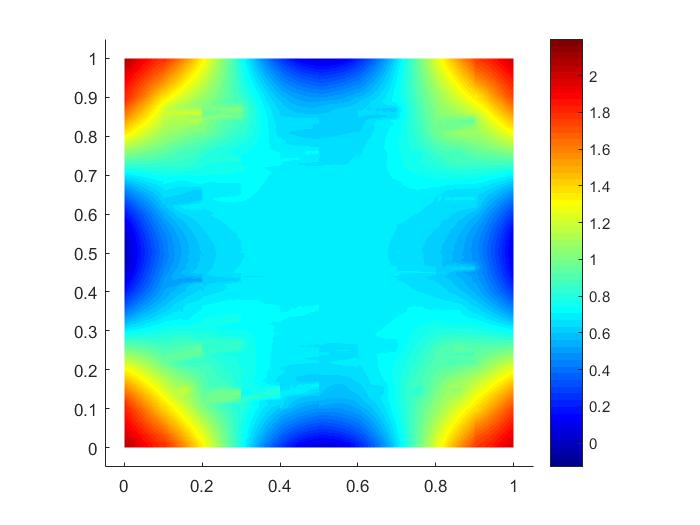}
	\end{minipage}%
	\begin{minipage}[t]{0.45\textwidth}
		\centering
		\includegraphics[width=3in]{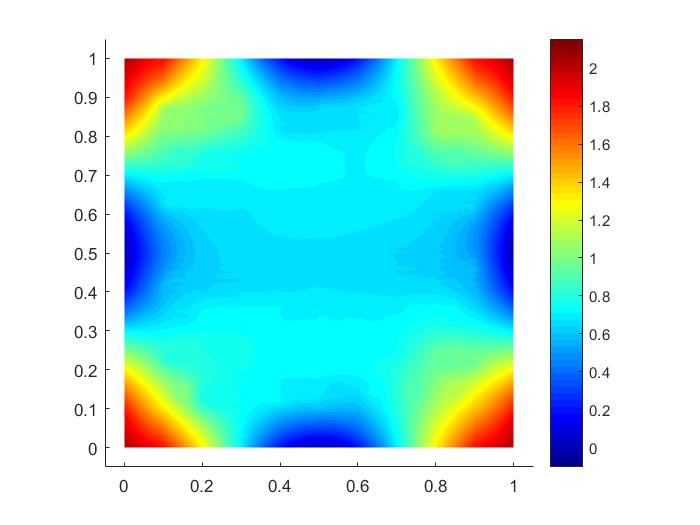}
	\end{minipage}%
	\caption{Fine solution and multiscale solution for Example 1. Top-Left: $u_{h,1}$. Top-Right: $\overline{u_h}$. Bottom-Left: $u_{H,1}$. Bottom-Right: $\overline{u_H}$. }
	\label{ex1 solution}
\end{figure}

For Example 2, we give the error tables for $\epsilon=5\times10^{-2},5\times10^{-3},5\times10^{-4}$, respectively. We present the errors for using various choices of number of basis functions in Table \ref{ex2 error table epsilon5-2}, \ref{ex2 error table epsilon5-3}, \ref{ex2 error table epsilon5-4}. We clearly see that, with a very small snapshot ratio, our method is able to
obtain solutions with very good accuracy. Furthermore, we observe a faster decay of the error when
smaller number of basis functions are used. In Figures \ref{ex2 solution}, we present the fine and multiscale
solutions with $\epsilon=5\times10^{-3}$ and $L=5$. We observe very good agreement of both solutions.

\begin{table}[ht]
	\begin{minipage}[t]{0.5\textwidth}
		\centering
		\begin{tabular}{|c|c|c|c|}
			\hline
			\hline
			$L$  & snapshot ratio & $e_1$ & $e_2$ \\
			\hline
			1  & 0.79\%  &  22.70\% &   9.73\% \\
			\hline
			2 & 1.59\%  & 20.36\%  &  8.43\% \\
			\hline
			3  & 2.38\%  &  16.97\% &  8.13\% \\
			\hline
			5 & 3.97\%  & 11.94\%  & 6.86\%  \\
			\hline
			7 & 5.56\%  & 8.09\%  & 4.64\%  \\
			\hline
			10 & 7.94\%  & 4.70\%  &  1.99\% \\
			\hline
			15 &  11.90\% & 2.48\% & 1.22\%  \\
			\hline
			20 &  15.87\% &  1.86\% & 0.91\%  \\	
			\hline
		\end{tabular}
		\caption{Errors for Example 2 with $\epsilon=5\times10^{-2}$.}
		\label{ex2 error table epsilon5-2}
	\end{minipage}%
	\begin{minipage}[t]{0.5\textwidth}
		\centering
		\begin{tabular}{|c|c|c|c|}
			\hline
			\hline
			$L$  & snapshot ratio & $e_1$ & $e_2$ \\
			\hline
			1  & 0.79\%  &  12.76\% &   11.98\% \\
			\hline
			2 & 1.59\%  & 11.02\%  &  10.64\% \\
			\hline
			3  & 2.38\%  &  3.40\% &  2.97\% \\
			\hline
			5 & 3.97\%  & 2.04\%  & 1.67\%  \\
			\hline
			7 & 5.56\%  & 1.77\%  & 1.43\%  \\
			\hline
			10 & 7.94\%  & 1.50\%  &  1.21\% \\
			\hline
			15 &  11.90\% & 1.38\% & 1.15\%  \\
			\hline
			20 &  15.87\% &  1.17\% & 0.95\%  \\	
			\hline
		\end{tabular}
		\caption{Errors for Example 2 with $\epsilon=5\times10^{-3}$.}
		\label{ex2 error table epsilon5-3}
	\end{minipage}
\end{table}

\begin{table}[ht]
	\centering 
	\begin{tabular}{|c|c|c|c|}
		\hline
		\hline
		$L$  & snapshot ratio & $e_1$ & $e_2$ \\
		\hline
		1  & 0.79\%  &  14.12\% &   14.11\% \\
		\hline
		2 & 1.59\%  & 20.87\%  &  20.86\% \\
		\hline
		3  & 2.38\%  &  11.69\% &  11.69\% \\
		\hline
		5 & 3.97\%  & 2.95\%  & 2.95\%  \\
		\hline
		7 & 5.56\%  & 2.71\%  & 2.71\%  \\
		\hline
		10 & 7.94\%  & 2.71\%  &  2.71\% \\
		\hline
		15 &  11.90\% & 2.88\% & 2.88\%  \\
		\hline
		20 &  15.87\% &  2.93\% & 2.92\%  \\	
		\hline
	\end{tabular}
	\caption{Errors for Example 2 with $\epsilon=5\times10^{-4}$.}
	\label{ex2 error table epsilon5-4}
\end{table}

\begin{figure}[ht]
	\centering
	\begin{minipage}[t]{0.45\textwidth}
		\centering
		\includegraphics[width=3in]{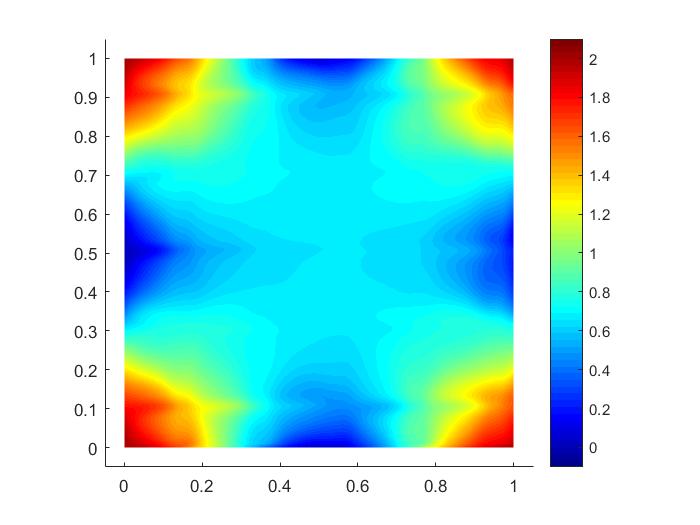}
	\end{minipage}%
	\begin{minipage}[t]{0.45\textwidth}
		\centering
		\includegraphics[width=3in]{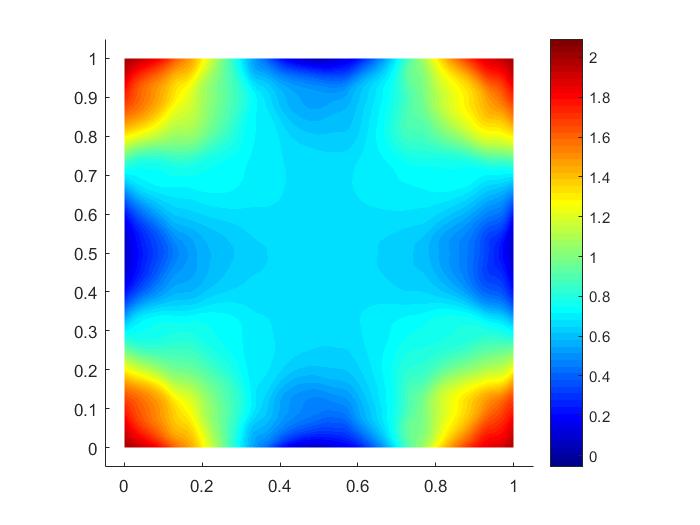}
	\end{minipage}\\%
	\begin{minipage}[t]{0.45\textwidth}
		\centering
		\includegraphics[width=3in]{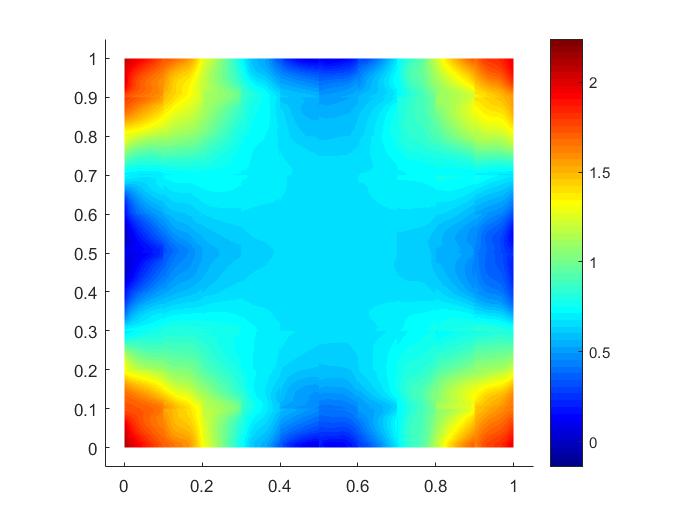}
	\end{minipage}%
	\begin{minipage}[t]{0.45\textwidth}
		\centering
		\includegraphics[width=3in]{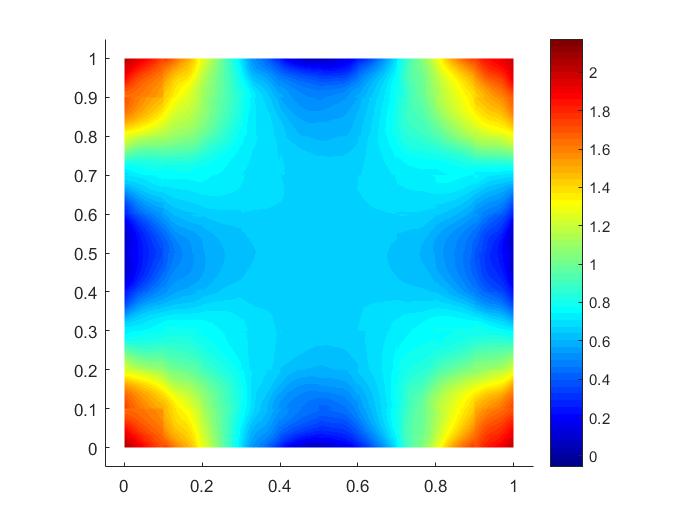}
	\end{minipage}%
	\caption{Fine solution and multisciale solution for Example 2. Top-Left: $u_{h,1}$. Top-Right: $\overline{u_h}$. Bottom-Left: $u_{H,1}$. Bottom-Right: $\overline{u_H}$. }
	\label{ex2 solution}
\end{figure}



\section*{Acknowledgements}
Q.~L.'s research is supported by NSF-DMS 1619778, 1750488, and UW-Madison Data Science Initiative.
E.~C.'s research is partially supported by Hong Kong RGC General Research Fund (Projects 14304217 and 14302018) and CUHK Direct Grant for Research 2018-19.

\bibliographystyle{plain}
\bibliography{references}

\end{document}